\newtheorem{theorem}{Theorem}[section]
\newtheorem{proposition}[theorem]{Proposition}
\newtheorem{lemma}[theorem]{Lemma}
\newtheorem{corollary}[theorem]{Corollary}
\def\S{\mathbb{S} } 
\def\T{\mathbb{T} }
\def\R{\mathbb{R} } 
\def\nbd{neighborhood } 
\def\R{\mathbb{R} }
\title[Extended orbit properties on surfaces]
{Extended orbit properties on surfaces}
\author{Tomoo Yokoyama}
\date{\today}
\address{Department of Mathematics, Hokkaido University, 
Kita 10, Nishi 8, Kita-Ku, Sapporo, Hokkaido, 060-0810, Japan \\
}
\email{yokoyama@math.sci.hokudai.ac.jp}
\thanks{The author is partially supported 
by the JST CREST Program at Department of Mathematics,  
Hokkaido University.}
\begin{document}

\maketitle

\begin{abstract}
In this paper, we study ``demi-caract\'eristique'' and (Poisson) stability in the sense of Poincar\'e. 
Using the definitions \'a la Poincar\'e  
for $\R$-actions $v$ on compact connected surfaces, 
we show that 
``$R$-closed'' $\Rightarrow$ 
``pointwise almost periodicity (p.a.p.)''  $\Rightarrow$   
``recurrence''  $\Rightarrow$ 
non-wandering. 
Moreover, we show that 
the action 
$v$ is  ``recurrence'' with $|\mathrm{Sing}(v)| < \infty$
iff 
$v$ is regular non-wandering.  
If there are no locally dense orbits, 
then $v$ is ``p.a.p.'' 
iff 
$v$ is ``recurrence'' without ``orbits'' containing infinitely singularities.  
If 
$|\mathrm{Sing}(v)| < \infty$, 
then 
$v$ is ``$R$-closed''  
iff 
$v$ is ``p.a.p.''. 
%
\end{abstract}

\section{Introduction and preliminaries}
In the Poincar\'e celebrated paper \cite{P}  
which is an origin of dynamical systems,   
he studied surface flows. 
In the series of the relative works, 
he used the slightly different definitions from the presence notations 
(e.g. semi-characteristics, limit cycles). 
On the other hand, 
the following fact for topological dynamics on compact metrizable spaces is known: 
$ 
\text{$R$-closed} \subsetneq
\text{p.a.p.} \subsetneq
\text{recurrent} \subsetneq
\text{non-wandering}. 
$
In this paper, 
we study a surface flow using the notations of Poincar\'e. 
In particular, we study ``demi-caract\'eristique'' and (Poisson) stability in the sense of Poincar\'e  
(We call these extended positive orbits and extended recurrence).   
Precisely, 
we show the following relation for $\R$-actions on compact surfaces:  
$$ 
\text{extended $R$-closed} \subsetneq
\text{extended p.a.p.} \subsetneq
\text{extended  recurrent} \subsetneq
\text{non-wandering}. 
$$
Moreover, we show that 
the $\R$-action 
$v$ on a compact surface $S$
is   extended recurrence  
with at most finitely many singularities 
if and only if 
$v$ is regular non-wandering.  
If $v$ has no locally dense orbits, 
then 
$v$ is extended recurrence with 
$| \mathrm{Sing}(v) \cap O_\mathrm{{ex}}(x)| < \infty$ for each point $x \in S$  
if and only if 
$v$ is  extended p.a.p..  
If 
$|\mathrm{Sing}(v)| < \infty$, 
then 
$v$ is extended $R$-closed   
if and only if 
$v$ is extended p.a.p..

Recall ``demi-caract\'eristique'' in the sense of Poincar\'e. 
Let $v$ be an $\R$-action on a surface $S$. 
For a singular point $x$ of $S$, 
we call that 
$x$ is a (topological) saddle for a continuous $\R$-action
if 
there is a \nbd of $x$ which is 
locally homeomorphic to a \nbd of a saddle for a $C^1$ $\R$-action. 
For a point $x$ of $S$, 
define $O^+_i(x)$ as follows: 
$O^+_0 := O^+(x)$,  
$$O^+_{i + 1}(x) := O^+_i(x) \cup \cup_{x' \in O^+_i(x)} \{ W^u(\omega(x')) \mid  \omega(x') : \text{ 
saddle } \}$$ 
for any successor ordinal $i$, 
and 
$O^+_{\mu} := \cup_{\mu > \nu}O^+_{\nu}$ 
for any limit ordinal $\nu$. 
Here $W^u(y) := \{ z \in S \mid \alpha(z) = \{ y \} \}$. 
Put $O^+_\mathrm{{ex}}(x) := \cup \{ O^+_{\nu}(x) \mid {\nu : \text{ordinal}} \}$ 
is called the extended positive orbit of $x$. 
Note Poincar\'e called this demi-caract\'eristique. 
Similarly, we defined the extended negative orbit $O^-_\mathrm{{ex}}(x)$ and so 
define 
the extended orbit $O_\mathrm{{ex}}(x) := O^+_\mathrm{{ex}}(x) \cup O^-_\mathrm{{ex}}(x)$. 
Note that generally 
$O_\mathrm{{ex}}(x) \neq O_\mathrm{{ex}}(y)$ 
for a point  $x \in S$ and for a point $y \in O_\mathrm{{ex}}(x)$. 
In fact, 
the binary relation $\{ (x, y) \mid y \in O_\mathrm{{ex}}(x) \}$ 
is reflexive and symmetric but need not transitive.  
If the extended orbit $O_\mathrm{{ex}}(x)$ is not a single point 
but a compact subset, 
then it is called an extended periodic orbit. 
Notice that 
the positive prolongations and  
the extend positive orbits are independent. 
%
%
%
A  closed subset $\gamma$ of an extended orbit is called 
a limit cycle in the sense of Poincar\'e 
or an extended limit cycle 
if 
$\gamma$ is 
not a singleton but 
a union of simple closed curves 
and 
there is a point $x$ of $S - \gamma$ 
such that 
$\gamma$ is 
either the omega limit  set $\omega(x)$ 
or the alpha limit set $\alpha(x)$ of $x$. 
A point $x$ of $S$ is extended positive recurrent
if either $x$ is positive recurrent 
or 
$x \in \overline{O^+_\mathrm{{ex}}(x) - O^+(x)}$.  
Similarly, we define ``extended negative recurrent''. 
A point $x$ of $S$ is extended recurrent
if $x$ is extended positive recurrent 
and extended negative recurrent. 
%
The $\R$-action $v$ is said to be 
extended 
recurrent if 
so is each point of $S$.  
By definitions, 
recurrence implies extended recurrence. 
Moreover 
each flow on a compact surface which consists of 
closed orbits and 
at least one 
 saddle connections 
is not recurrent but extended recurrent. 
In addition, 
there is a flow which is  
not  extended recurrent but non-wandering. 
Indeed,  consider 
the unit sphere $\S^2 \subset \R^3$ 
and define 
$v_t: \S^2 \to \S^2$ with $\mathrm{Fix}(v) = \{(1, 0, 0) \}, \{ (0, 0, \pm 1) \}$ 
such that  
the regular orbits consists of 
$(\S^1 \times \{0 \}) - \{(1, 0, 0) \}$ 
and of 
circles each of which is 
the intersection of $\S^2 \cap (\R^2 \times \{z\})$ for some $z \neq 0 \in (-1,1)$.

\section{Extended recurrence and Non-wandering property}

From now on, 
let $v$ be an  $\R$-action on a compact surface $S$.  
Denote by $\mathrm{LD}$ (resp. $\mathrm{E}$)
the union of locally dense (resp. exceptional) orbits of $v$. 
Recall that 
an orbit $O$ is proper if 
$\overline{O} - O$ is closed, 
is locally dense if 
$\mathrm{int}\overline{O} \neq \emptyset$, 
and is exceptional if 
$O$ is neither proper nor locally dense 
 and 
that 
a point is proper (resp. locally dense, exceptional) 
if so is it. 
Let $\mathrm{P}$ be the union of points whose orbits are not closed but proper.  
%
Recall the following fundamental fact. 

\begin{lemma}\label{lem0a} 
The set of saddles are countable. 
\end{lemma}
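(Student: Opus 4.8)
The plan is to show that every saddle is an isolated singular point and then to invoke the second countability of the surface. First I would unwind the definition: if $x$ is a (topological) saddle, then by hypothesis $x$ admits a \nbd $U_x$ homeomorphic to a \nbd of a saddle of a $C^1$ $\R$-action. The standard $C^1$ saddle, namely the origin of the flow generated by $\dot{\xi} = \xi,\ \dot{\eta} = -\eta$, is the unique singular point in such a model \nbd. Since a homeomorphism carries singular points to singular points, this property is transported to $U_x$, so $x$ is the only singular point of $v$ in $U_x$. In particular $U_x$ contains no saddle other than $x$ itself.

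Next I would use this to bound the cardinality. The family $\{ U_x \}$ indexed by the saddles has the feature that each $U_x$ meets the set of saddles only in $\{ x \}$; equivalently, the set of saddles is a discrete subspace of $S$. Since $S$ is a compact surface, it is separable metrizable and hence second countable, so I would fix a countable base $\{ B_n \}$ of its topology. For each saddle $x$ choose a basic open set $B_{n(x)}$ with $x \in B_{n(x)} \subseteq U_x$; then $B_{n(x)}$ contains no saddle other than $x$, so the assignment $x \mapsto n(x)$ is injective from the set of saddles into the countable index set. Therefore the set of saddles is countable.

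The only step carrying genuine content is the first one, namely verifying that a topological saddle is an isolated singularity. This is forced by the definition adopted here, since being ``locally homeomorphic to a saddle of a $C^1$ flow'' requires the local model to possess an isolated singular point, and any isolating \nbd pulls back along the homeomorphism. I do not expect a real obstacle here; the remaining ingredient, that a discrete subset of a second countable space is countable, is a purely topological fact that uses no property of the flow $v$.
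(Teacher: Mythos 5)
Your proposal is correct and follows essentially the same route as the paper: the paper's proof likewise observes that each saddle has a neighborhood containing no other saddles and then invokes second countability of $S$ to conclude countability. You have merely supplied the details (the local $C^1$ saddle model forcing isolation, and the injection into a countable base) that the paper leaves implicit.
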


\begin{proof} 
By the definition of saddles, 
each saddle has a \nbd which contains no other saddles. 
Since $S$ is second countable, 
the set of saddles can be enumerated 
and so is countable. 
\end{proof}

The above proof also shows that 
an extended orbit $O_\mathrm{{ex}}(x)$ for a point $x$ of $S$ 
contains at most countably many saddles and 
is $O_\mathrm{\aleph_1}(x)$. 
Now we show the useful tools. 

\begin{lemma}\label{lem0} 
Each extended periodic orbit $O$
 consists of 
finitely many proper orbits and saddles. 
\end{lemma}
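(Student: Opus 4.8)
The plan is to read off the entire structure of $O = O_\mathrm{{ex}}(x)$ directly from the transfinite construction, using compactness only to bound cardinalities. Since $O$ is compact it is closed and flow-invariant, so it contains the $\omega$- and $\alpha$-limit set of each of its points and is a union of orbits. I would establish three things in turn: every singular point of $O$ is a saddle; there are only finitely many such saddles; and there are only finitely many regular orbits. Together with properness this yields the claim.

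First I would identify the singular points of $O$. By definition every point of $O^+_\mathrm{{ex}}(x)$ lies either on the base semiorbit $O^+(x)$ or in some $W^u(p)$ with $p = \omega(x')$ a saddle, and symmetrically every point of $O^-_\mathrm{{ex}}(x)$ lies on $O^-(x)$ or in some $W^s(p)$ with $p$ a saddle. If $z \in W^u(p)$ is singular, then $\{p\} = \alpha(z) = \{z\}$, so $z = p$ is a saddle; thus the only singular point contributed by an unstable (resp. stable) manifold is the saddle itself. Finally, if the base point $x$ is singular, then $\omega(x) = \alpha(x) = \{x\}$ is not a saddle, the construction adds nothing, and $O_\mathrm{{ex}}(x) = \{x\}$ is a single point, contrary to hypothesis; hence a singular base point is a saddle as well. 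Consequently every singular point of $O$ is a saddle.

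Next I would bound the saddles. By definition a saddle is an isolated singularity: it has a \nbd containing no other singular point. Hence no sequence of distinct saddles can converge to a saddle. Since the singular set is closed and $O$ is closed, every accumulation point of singular points of $O$ is again a singular point of $O$, hence a saddle by the previous step — impossible unless there are none nearby. Thus the saddles in $O$ form a discrete closed subset of the compact set $O$ and are finite in number.

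It remains to count the regular orbits and to record properness; here lies the only real subtlety. Every point of $O$ beyond the base orbit lies in some $W^u(p) \cup W^s(p)$ for one of the finitely many saddles $p$, that is, on a separatrix; and a (topological) saddle, being locally homeomorphic to a $C^1$ saddle, has only finitely many separatrices, so $W^u(p) \cup W^s(p)$ is a finite union of orbits. Adding the single base orbit $O(x)$, the set $O$ is a finite union of orbits. Each separatrix orbit has a one-point $\alpha$- or $\omega$-limit and so is proper; and were the base orbit locally dense or exceptional, then $\omega(x)$ would fail to be a saddle, the construction would again add nothing, and $O_\mathrm{{ex}}(x) = O(x)$ would be non-closed, contradicting compactness. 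Hence all orbits are proper, and $O$ consists of finitely many proper orbits and saddles. The step I expect to require the most care is the saddle-accumulation argument, since it is exactly there that compactness, the isolatedness of topological saddles, and the identification of all singular points of $O$ as saddles must be combined.
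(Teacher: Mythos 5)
Your proof is correct and follows essentially the same route as the paper: saddles in $O$ are finite because they are isolated and would otherwise accumulate at a non-saddle singularity of the compact set $O$, which the definition of extended orbits forbids; hence $O$ is a finite union of orbits (base orbit plus separatrices), all of which must be proper. The only difference is cosmetic — for properness of the base orbit you observe that a locally dense or exceptional base orbit yields $O_\mathrm{ex}(x)=O(x)$ non-closed, while the paper notes that the closure of such an orbit would contribute uncountably many orbits, contradicting finiteness — and your write-up is, if anything, more explicit about the steps the paper leaves implicit.
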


\begin{proof} 
We show that 
$O$ contains at most finitely many saddles.  
Otherwise 
$O$ contains infinitely many saddles.  
The definition of saddles implies that 
$O$ contains a singularity which is not a saddle, 
which contradicts to the definition of extended orbits. 
Then $O$ contains at most finitely many distinct orbits. 
If $O$ contains 
either locally dense orbits 
or 
exceptional orbits, 
then 
the closedness of $O$ implies that 
$O$ contains at least uncountably many orbits, 
which contradicts to the finiteness. 
Thus $O$ consists of 
finitely many proper orbits and saddles. 
\end{proof}

\begin{lemma}\label{lem00} 
If 
there are extended limit cycles, 
there is a wandering point $x \ in \mathrm{P}$ 
such that 
$O(x) = O_\mathrm{{ex}}(x)$  
\end{lemma}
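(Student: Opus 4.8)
The plan is to produce the required point explicitly from the spiraling behaviour near the extended limit cycle. By hypothesis there is a cycle $\gamma$, a union of simple closed curves lying in some extended orbit, with $\gamma = \omega(y_0)$ or $\gamma = \alpha(y_0)$ for some $y_0 \in S - \gamma$; after reversing time if necessary I assume $\gamma = \omega(y_0)$. Since $\gamma$ is a union of simple closed curves it contains a regular point $p$ (either a point of a periodic orbit or a point on a connecting separatrix of a saddle polygon), so I may fix a small transversal arc $\Sigma$ through $p$. Because $p \in \omega(y_0)$, the orbit of $y_0$ meets $\Sigma$ in a sequence $y_1, y_2, \dots \to p$, and the Poincar\'e--Bendixson monotonicity of transversal crossings makes this sequence strictly monotone on $\Sigma$. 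The standard trapping-region argument then shows that the basin $B := \{ z \in S \mid \omega(z) = \gamma \}$ contains a nonempty open annular collar $A$ of $\gamma$ on the side of $y_0$.

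Next I would verify that every point of $B$ is wandering. Fixing a flow box $N$ around a crossing point and using that the successive crossings of any single orbit with $\Sigma$ form a monotone sequence while the first-return map is orientation preserving, one sees that orbits issuing from $N \cap \Sigma$ never re-enter $N \cap \Sigma$; hence $v_t(N) \cap N = \emptyset$ for all large $|t|$, so the point is wandering. In particular such a point $z$ satisfies $z \notin \omega(z) \cup \alpha(z)$, whence $\overline{O(z)} - O(z) = \omega(z) \cup \alpha(z)$ is closed and $O(z)$ is proper; and $O(z)$ is not closed because $\gamma = \omega(z)$ is a nontrivial union of closed curves disjoint from $O(z)$. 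Thus every point of $B$ lies in $\mathrm{P}$.

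It remains to choose the point so that its extended orbit collapses to its ordinary orbit. For this I would invoke Lemma \ref{lem0a}: there are only countably many saddles, and each contributes finitely many unstable separatrices, so $\bigcup_{s \text{ saddle}} W^u(s)$ is a countable union of one-dimensional orbits and therefore cannot contain the two-dimensional open set $A$. I pick $x \in A - \bigcup_s W^u(s)$. Then $\omega(x) = \gamma$ is not a single saddle, so the positive extension adds nothing, while $\alpha(x)$ is not a single saddle by the choice of $x$, so the negative extension adds nothing; since every point of $O(x)$ shares these two limit sets, the constructions of $O^+_\mathrm{{ex}}$ and $O^-_\mathrm{{ex}}$ stabilise immediately and $O_\mathrm{{ex}}(x) = O(x)$. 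Together with the previous paragraph this yields a wandering point $x \in \mathrm{P}$ with $O(x) = O_\mathrm{{ex}}(x)$, as desired.

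The step I expect to be the main obstacle is making the openness of the spiral basin $A$ rigorous when $\gamma$ is not a single periodic orbit but a saddle polygon (a graphic): the return map on $\Sigma$ is then only defined on one side, and its behaviour near $p$ must be controlled using the local saddle structure guaranteed by the definition of topological saddle. Once $A$ is known to be open and two-dimensional, the countability input from Lemma \ref{lem0a} and the triviality of both extensions are routine.
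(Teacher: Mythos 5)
Your proposal is correct and follows essentially the same route as the paper: both arguments produce an uncountable family (an open collar) of proper orbits spiraling to the extended limit cycle, then use the countability of saddles (Lemma \ref{lem0a}) to select one orbit avoiding every $W^u(\text{saddle})$, so that its extended orbit equals its orbit and its points are wandering. Your version merely fills in more detail on the transversal/monotonicity and wandering steps than the paper does.
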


\begin{proof} 
Suppose that 
there is an extended limit cycle $C$. 
We may assume that 
there is a point whose 
omega limit set is $C$. 
Then 
there are uncountably many  proper orbits 
each of whose omega limit set 
is $C$. 
Since the set of saddles are countable, 
there is a proper orbit $O$ whose extend orbit is coincident with itself 
such that $\omega(O) = C$. 
This implies that 
each point of $O$ is wandering. 
\end{proof}

\begin{lemma}\label{lem1} 
If 
$\mathrm{P}$ consists of at most finitely many orbits, 
then 
$v$ is non-wandering. 
\end{lemma}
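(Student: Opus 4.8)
The plan is to combine the orbit classification of $S$ with the fact that the non-wandering set $\Omega(v)$ is closed and flow-invariant. First I would write $S$ as the disjoint union of the singular set $\mathrm{Sing}(v)$, the closed (periodic) orbits, the set $\mathrm{P}$ of non-closed proper orbits, the set $\mathrm{LD}$ of locally dense orbits, and the set $\mathrm{E}$ of exceptional orbits. Since $\Omega(v)$ is closed, it suffices to produce a dense subset of $S$ consisting of non-wandering points, and I will take this dense set to be $S \setminus \mathrm{P}$.

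The core step is to check $S \setminus \mathrm{P} \subseteq \Omega(v)$. Points of $\mathrm{Sing}(v)$ are fixed, hence non-wandering, and points on closed orbits are non-wandering because their orbits return periodically. For a point $x$ on a locally dense or exceptional orbit, that is, a non-proper orbit, I would show $x$ is (Poisson) recurrent and therefore non-wandering. After replacing $x$ by a suitable point of its orbit (permissible since $\Omega(v)$ is flow-invariant), I would take a local transversal $\Sigma$ and argue that non-properness forces the successive crossings of $O(x)$ with $\Sigma$ to accumulate on the crossing of $x$; the associated return times cannot be bounded, since a bounded accumulating sequence would yield $v_{t^*}(x) = x$ with $t^* \neq 0$ and hence a periodic orbit, contradicting non-properness. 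Thus $x \in \omega(x) \cup \alpha(x)$, so $x$ is non-wandering.

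Next I would use the hypothesis that $\mathrm{P}$ is a finite union of proper orbits $O_1, \dots, O_k$. Since a proper orbit is by definition not locally dense, $\mathrm{int}\,\overline{O_i} = \emptyset$, and as $\overline{O_i}$ is closed it is nowhere dense. A finite union of nowhere dense sets is nowhere dense, so $\overline{O_1} \cup \cdots \cup \overline{O_k}$ has empty interior; in particular $\mathrm{P}$ has empty interior and $S \setminus \mathrm{P}$ is dense. Combining with the previous step, $\Omega(v)$ is a closed set containing the dense set $S \setminus \mathrm{P}$, whence $\Omega(v) = S$ and $v$ is non-wandering.

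The hard part will be the recurrence claim for non-proper orbits, which is where the two-dimensionality of $S$ really enters through the flow-box and transversal structure; the exceptional orbits are the delicate case, since there one must convert the failure of local closedness into genuine self-accumulation of the orbit along $\Sigma$ with unbounded return times. By contrast, the singular and periodic cases are immediate, and the nowhere-density count is routine.
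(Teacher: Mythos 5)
Your proposal is correct and follows essentially the same route as the paper: both show that $\mathrm{P}$ is topologically negligible (the paper via the flow box theorem, you via nowhere density of the finitely many proper orbit closures), that every point of $\mathrm{E}\sqcup\mathrm{LD}\sqcup\mathrm{Per}(v)\sqcup\mathrm{Sing}(v)$ is recurrent hence non-wandering, and then conclude by closedness of the non-wandering set. The only difference is that you spell out the transversal argument for recurrence of non-proper orbits, which the paper simply invokes as a known fact.
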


\begin{proof} 
It suffices to show that 
each point $x \in \mathrm{P}$ is non-wandering. 
Indeed, 
By the flow box theorem, 
we have 
$x \in \overline{\mathrm{E} \sqcup \mathrm{LD} \sqcup \mathrm{Per}(v)}$. 
Since each point of $\mathrm{E} \sqcup \mathrm{LD} \sqcup \mathrm{Per}(v)$ 
is either positive or negative recurrent, 
we have that 
$x$ is non-wandering and so 
$v$ is non-wandering. 
\end{proof}

\begin{lemma}\label{lem023} 
Suppose that $v$ is extended recurrent. 
For any point $x$ which is regular or is a saddle, 
there is a \nbd $U$ such that $U - O_\mathrm{{ex}}(x)$ contains no singularities. 
\end{lemma}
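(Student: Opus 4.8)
The plan is to reduce the statement to a purely local fact about the singularities lying in a small neighborhood of $x$, treating the two cases ``$x$ regular'' and ``$x$ a saddle'' separately. The one common ingredient is that the binary relation defining extended orbits is reflexive, so that $x \in O^+(x) \subseteq O^+_\mathrm{{ex}}(x) \subseteq O_\mathrm{{ex}}(x)$; in particular, if $x$ itself happens to be a singularity, it is automatically excised when we pass to $U - O_\mathrm{{ex}}(x)$.

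First I would dispose of the regular case. If $x$ is regular, then $v_T(x) \neq x$ for some $T > 0$, and by continuity of the action $v_T(y) \neq y$ for all $y$ in some \nbd $U$ of $x$; equivalently, by the flow box theorem $x$ admits a \nbd on which the flow is a trivial product. Either way $U$ contains no singularities at all, so $U - O_\mathrm{{ex}}(x)$ contains none, and we are done without even invoking the extended orbit.

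Next I would treat the saddle case, which is where the extended-orbit bookkeeping enters. By the very definition of a (topological) saddle, $x$ has a \nbd $U$ homeomorphic to a standard $C^1$-saddle \nbd via a conjugacy of the flows; since a conjugacy carries fixed points to fixed points and the $C^1$ saddle has exactly one, we get $U \cap \mathrm{Sing}(v) = \{x\}$, i.e. the saddle is isolated among singularities. Combining this with $x \in O_\mathrm{{ex}}(x)$ from the preceding paragraph yields $(U - O_\mathrm{{ex}}(x)) \cap \mathrm{Sing}(v) = \emptyset$, as required.

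The genuine content, and hence the only place I expect any friction, is the saddle case: one must be sure that the local model forces the singularity to be isolated and that the surviving separatrices $W^u(x)$ and $W^s(x)$, which are exactly the pieces thrown into $O^+_\mathrm{{ex}}(x)$ and $O^-_\mathrm{{ex}}(x)$ at the first transfinite step, carry no further singularities close to $x$. Both facts are immediate from the standard-saddle picture, so no serious obstacle arises. I note that the hypothesis of extended recurrence does not seem to be consumed by either case; it appears to be carried along as the standing assumption of the section, or for use alongside this $U$ in later arguments, rather than being needed for the local statement itself.
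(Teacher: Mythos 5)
You have read ``$U$'' as a \nbd of the point $x$, and under that reading your argument is complete but the lemma becomes nearly vacuous (as you yourself notice, extended recurrence plays no role, and in the regular case even the subtraction of $O_\mathrm{{ex}}(x)$ is idle). The paper intends, proves, and later uses the stronger statement that $U$ is a \nbd of the \emph{whole extended orbit} $O_\mathrm{{ex}}(x)$: its proof explicitly produces ``a \nbd $U$ of $O_\mathrm{{ex}}(x)$ such that $U - O_\mathrm{{ex}}(x)$ contains no singularities'', and this is the form invoked in Proposition \ref{lem23} (``Let $U$ be a \nbd of $O_\mathrm{{ex}}(x)$ such that $U - O_\mathrm{{ex}}(x)$ contains no singularities'') and in Lemma \ref{lem034} (to conclude that $\overline{\mathrm{Per}(v) \cup \mathrm{LD}}$ is clopen). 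A \nbd of $x$ alone says nothing about singularities accumulating on the other points of $O_\mathrm{{ex}}(x)$, which can be an infinite, even non-closed, set containing infinitely many saddles (see the example following Lemma \ref{prop11}). So as written your proposal establishes a strictly weaker statement than the one the paper needs downstream.

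The gap is easy to close with exactly the ingredients you already have. By the definition of extended orbits, every singularity lying in $O_\mathrm{{ex}}(x)$ is a saddle (only saddles and their separatrices are ever adjoined), so every point $p$ of $O_\mathrm{{ex}}(x)$ is either regular or a saddle. Your two local cases then give, for each such $p$, a \nbd $V_p$ with $V_p \cap \mathrm{Sing}(v) \subseteq O_\mathrm{{ex}}(x)$: a flow box if $p$ is regular, and the isolated-saddle \nbd if $p$ is a saddle. Taking $U := \bigcup_{p \in O_\mathrm{{ex}}(x)} V_p$ yields the required \nbd of $O_\mathrm{{ex}}(x)$. This is precisely the paper's argument; the only step you omitted is quantifying over all points of the extended orbit rather than over $x$ alone.
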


\begin{proof} 
If $O_\mathrm{{ex}}(x)$ contains no saddles, 
then it contains no singularities 
and so  
the flow box theorem implies the assertion. 
Thus we may assume that 
$O_\mathrm{{ex}}(x)$ contains saddles points. 
By the definition of extended orbits, 
we obtain that 
$O_\mathrm{{ex}}(x) \cap \mathrm{Sing}(v)$ consists of saddles points. 
Since each saddle $p$ has a \nbd $U_p$ such that 
$U_p - \{ p \}$ consists of regular points. 
By the flow box theorem, 
there is a \nbd $U$ of $O_\mathrm{{ex}}(x)$ such that 
$U - O_\mathrm{{ex}}(x)$  contains no singularities. 
\end{proof}

The extended recurrence implies 
the (usual) non-wandering property. 

\begin{lemma}\label{lem2} 
If $v$ is  extended recurrent, 
then 
$v$ is non-wandering. 
\end{lemma}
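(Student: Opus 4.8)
The plan is to show that every point of $S$ is non-wandering by eliminating the only candidates for wandering points. First I would use the decomposition $S = \mathrm{Sing}(v) \sqcup \mathrm{Per}(v) \sqcup \mathrm{P} \sqcup \mathrm{LD} \sqcup \mathrm{E}$ and note that singular points and periodic points are trivially non-wandering, while (as already invoked in the proof of Lemma \ref{lem1}) each point of $\mathrm{E} \sqcup \mathrm{LD} \sqcup \mathrm{Per}(v)$ is positive or negative recurrent, hence non-wandering. Therefore it suffices to treat a point $x \in \mathrm{P}$. Since a wandering point is neither positive nor negative recurrent, I may assume $x$ is such a point and derive non-wandering from the hypothesis.

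Next I would unwind extended recurrence at $x$. As $x$ is not positive recurrent, extended positive recurrence forces $x \in \overline{O^+_\mathrm{ex}(x) - O^+(x)}$; in particular $O^+_\mathrm{ex}(x) \neq O^+(x)$, so the transfinite construction adjoins at least one unstable manifold. Tracking this, I would extract a saddle $p$ with $p \in \omega(x)$ and an unstable separatrix $\sigma$ of $p$ (so $\alpha(\sigma) = \{p\}$) with $x \in \omega(\sigma)$, i.e. points of $\sigma$ return arbitrarily close to $x$. Combined with $p \in \omega(x)$, this yields a cyclic configuration $x \rightsquigarrow p \rightsquigarrow x$: the forward orbit of $x$ approaches $p$ along a stable separatrix, while $\sigma$ leaves $p$ and forward-approaches $x$. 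Here the extended-limit-cycle analysis is useful: by Lemma \ref{lem00} together with the hypothesis there are no extended limit cycles, which prevents $\omega(x)$ from being a loop of saddle connections and keeps the approach to $p$ controlled.

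Finally I would route a near-return through this configuration. Fixing a transversal $\Sigma$ at the regular point $x$, the local saddle normal form (from the definition of a topological saddle) together with the flow box theorem shows that orbits starting near $x$ on the side of the stable separatrix bounding the sector adjacent to $\sigma$ pass by $p$ and exit along $\sigma$, crossing $\Sigma$ again near $x$; since $x \in \omega(\sigma)$, these crossings occur for arbitrarily large times. This produces $z_n \to x$ and $t_n \to +\infty$ with $v_{t_n}(z_n) \to x$, so $x$ is non-wandering, completing the argument.

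The main obstacle is the middle step: the separatrix accumulating at $x$ may be produced only at a high stage of the transfinite construction, so in general one obtains not a single saddle but a finite chain $p_1, \dots, p_k$ of saddles linked by separatrices running from $\omega(x)$ back to $x$. Making the routing rigorous along such a chain---passing successively near each $p_i$ through the correct hyperbolic sector and ensuring that nearby orbits are not lost between consecutive saddles---is the delicate part, and is precisely where the countability of saddles (Lemma \ref{lem0a}) and the absence of stray singularities near $O_\mathrm{ex}(x)$ (Lemma \ref{lem023}) do the real work.
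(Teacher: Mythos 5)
Your reduction to points of $\mathrm{P}$ and your observation that extended recurrence forces $\omega(x)$ (and $\alpha(x)$) to be a saddle for such $x$ are both correct and coincide with the first half of the paper's argument. But from there you take a hard, constructive route --- building explicit orbits that leave a neighborhood of $x$, pass through a saddle (or chain of saddles), and return --- and this route has a genuine gap that you name but do not close. The condition $x \in \overline{O^+_\mathrm{ex}(x) - O^+(x)}$ does \emph{not} yield a single unstable separatrix $\sigma$ with $x \in \omega(\sigma)$, nor even a finite chain of saddle connections running from $\omega(x)$ back to $x$: the points of $O^+_\mathrm{ex}(x) - O^+(x)$ accumulating at $x$ may lie on infinitely many distinct separatrices produced at arbitrarily high (even transfinite) stages of the construction, no one of which accumulates on $x$; and the separatrix leaving $\omega(x)$ may simply be a locally dense orbit, in which case there is no chain of saddles at all and the ``routing through hyperbolic sectors'' picture does not apply. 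Your closing paragraph concedes that making the routing rigorous is ``the delicate part,'' but the tools you point to (Lemma \ref{lem0a} on countability of saddles and Lemma \ref{lem023} on the absence of nearby singularities) do not control which sector nearby orbits enter or guarantee they are not lost between consecutive saddles, so the central step of your proof is missing.

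The paper avoids this entirely with a soft argument you should compare against: since every orbit in $\mathrm{P}$ is a separatrix of one of the countably many saddles, $\mathrm{P}$ is a countable union of nowhere dense orbits, so $\mathrm{int}\,\mathrm{P} = \emptyset$ by Baire category; as $\mathrm{Sing}(v)$ is closed and points of $\mathrm{P}$ are regular, each $x \in \mathrm{P}$ then lies in $\overline{\mathrm{E} \sqcup \mathrm{LD} \sqcup \mathrm{Per}(v)}$, a set of (positively or negatively) recurrent points, and the non-wandering set is closed. No returning orbit near $x$ is ever constructed. If you want to salvage your approach, you would need to either prove the routing claim in full generality (handling locally dense separatrices and infinite families of separatrices) or switch to the density-of-recurrent-points argument.
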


\begin{proof} 
Note 
$S - \mathrm{P} = 
\mathrm{E} \sqcup 
\mathrm{LD} \sqcup 
\mathrm{Per}(v) \sqcup 
\mathrm{Sing}(v)$. 
If $\mathrm{int}\mathrm{P} = \emptyset$, then 
the closedness of $\mathrm{Sing}(v)$ implies that 
$\overline{\mathrm{E} \sqcup 
\mathrm{LD} \sqcup 
\mathrm{Per}(v)} \supset \mathrm{P}$ 
and so 
$v$ is non-wandering. 
Thus 
it suffices to show 
$\mathrm{int}\mathrm{P} = \emptyset$. 
Indeed, 
recall that 
the set of saddles are countable. 
For any $x \in \mathrm{P}$, 
the extended recurrence implies that 
the omega (resp. alpha) limit set of $x$ is a saddle. 
Therefore $\mathrm{P}$ consists of countable orbits. 
Since $S$ is a Baire space, 
we have that 
$\mathrm{int}\mathrm{P} = \emptyset$. 
\end{proof}

Recall that 
a continuous $\R$-action $v$ is regular if 
each singularity of $v$ is locally homeomorphic to 
a non-degenerated singularity of a $C^1$ vector field. 
Note 
the non-wandering flow $v$ has no no exceptional orbits 
such that 
$\overline{\mathrm{LD} \sqcup \mathrm{Per}(v)} \supseteq  S- \mathrm{Sing}(v)$, 
by 
Lemma 2.1 \cite{Y}.

\begin{proposition}\label{lem23} 
Suppose that $v$ is non-wandering. 
Then 
$v$ is regular if and only if 
$v$ is extended recurrent and 
has finitely many singularities. 
Moreover,  
if $v$ is regular, then 
either 
$O_\mathrm{{ex}}(x)$ is closed 
or 
both 
$O^+_\mathrm{{ex}}(x)$ 
and 
$O^-_\mathrm{{ex}}(x)$ are locally dense 
for any $x \in S$. 
\end{proposition}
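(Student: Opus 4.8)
The plan is to establish the stated equivalence as two implications, keeping ``$v$ is non-wandering'' as a standing hypothesis, and then to read off the dichotomy by a case analysis on the orbit type of $x$. For the implication \emph{regular $\Rightarrow$ extended recurrent with finitely many singularities}, I would first note that a non-degenerate $C^1$ singularity is isolated, so $\mathrm{Sing}(v)$ is a discrete closed subset of the compact surface $S$ and hence finite. Since $v$ is non-wandering I would next exclude sinks and sources (a sink, or a source under time reversal, yields a neighborhood of wandering points), so that each singularity is locally a center or a topological saddle. For extended recurrence I would invoke Lemma 2.1 of \cite{Y}: $v$ has no exceptional orbits and $\overline{\mathrm{LD} \sqcup \mathrm{Per}(v)} \supseteq S - \mathrm{Sing}(v)$. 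Every point of $\mathrm{LD} \sqcup \mathrm{Per}(v) \sqcup \mathrm{Sing}(v)$ is recurrent, hence extended recurrent, so only $\mathrm{P}$ remains. For $x \in \mathrm{P}$ I would argue that $\omega(x)$ is a single saddle: by Poincar\'e--Bendixson it is a singularity, a periodic orbit, or a graphic, and non-wandering rules out sinks, limit cycles, and (via Lemma \ref{lem00}) graphics realized as limit sets, while a center cannot be the $\omega$-limit of a non-closed orbit. Writing $\omega(x) = p$ and $\alpha(x) = q$ (both saddles, by the symmetric argument), the definition of the extended orbit feeds $W^u(p)$ into $O^+_\mathrm{{ex}}(x)$, and once $q$ is reached along the forward separatrix chain one has $O^-(x) \subseteq W^u(q) \subseteq O^+_\mathrm{{ex}}(x)$, whence $x \in \overline{O^+_\mathrm{{ex}}(x) - O^+(x)}$; the negative case is symmetric.

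For the converse \emph{extended recurrent with finitely many singularities $\Rightarrow$ regular}, I must show each singularity $x$ is locally a center or a topological saddle. If some punctured neighborhood of $x$ consists entirely of closed orbits, then $x$ is a center. Otherwise I would produce a nearby proper non-closed orbit $O(y)$ having $x$ as a limit, say $x \in \omega(y) \cup \alpha(y)$. Extended recurrence of $y$ forces $O^+_\mathrm{{ex}}(y) \neq O^+(y)$ (or its negative analogue); since all points of $O^+(y)$ share the $\omega$-limit $\omega(y)$, the first enlargement can occur only if $\omega(y)$ is a single saddle, so that $x$ itself is a topological saddle. In particular I would rule out elliptic and parabolic sectors at $x$: a homoclinic orbit issuing from an elliptic sector, or a center-manifold orbit from a parabolic sector, has $x$ as a limit while $x$ is not a saddle, contradicting extended recurrence. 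Lemma \ref{lem023} supplies the neighborhood control that lets this run for a merely continuous, possibly non-$C^1$, singularity.

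For the dichotomy I would assume $v$ regular and examine $O_\mathrm{{ex}}(x)$ by the orbit type of $x$. If $x$ is singular, periodic, or lies on a separatrix ($x \in \mathrm{P}$), then $O_\mathrm{{ex}}(x)$ is an extended periodic orbit, so by Lemma \ref{lem0} it is a finite union of proper orbits and saddles and in particular closed. The only remaining type, since \cite{Y} excludes exceptional orbits, is locally dense; then $\omega(x)$ and $\alpha(x)$ are not single saddles, so $O^+_\mathrm{{ex}}(x) = O^+(x)$ and $O^-_\mathrm{{ex}}(x) = O^-(x)$, and as $\overline{O^+(x)}$ has nonempty interior both half extended orbits are locally dense.

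The main obstacles are two. In the forward implication, closing the separatrix chain so that $q = \alpha(x)$ is actually reached from $p = \omega(x)$ inside $O^+_\mathrm{{ex}}(x)$ — equivalently, that every saddle connection in a non-wandering flow lies on a heteroclinic cycle rather than being a one-sided transient — is where non-wandering must be exploited most carefully, since nearby periodic orbits alone guarantee return without closing the chain. In the converse, the obstacle is that a continuous singularity need not admit a finite sector decomposition a priori, so both the construction of a proper orbit limiting to $x$ and the elimination of elliptic and parabolic behavior cannot appeal to $C^1$ normal forms and must be driven entirely by extended recurrence together with Lemma \ref{lem023}.
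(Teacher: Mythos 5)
Your converse direction is close in spirit to the paper's (the paper organizes it via the boundaries of $\mathrm{Per}(v)$ and of $\mathrm{LD}$ rather than via sector decompositions, but the mechanism is the same: extended recurrence forces the limit sets of nearby non-closed proper orbits to be saddles). The forward implication, however, has a genuine gap, and it is the one you flag yourself. You want $x\in\overline{O^{+}_{\mathrm{ex}}(x)-O^{+}(x)}$ for $x$ on a saddle connection, and you try to get it by reaching $q=\alpha(x)$ from $p=\omega(x)$ inside $O^{+}_{\mathrm{ex}}(x)$, so that $O^{-}(x)\subseteq W^{u}(q)$. This is not just unproven: the enlargement step adds $W^{u}(\omega(x'))$ only when $\omega(x')$ is a saddle, so the chain stops dead the moment it reaches a locally dense separatrix (whose $\omega$-limit set is a quasi-minimal set, not a saddle), and there is then no reason for $q$ ever to be reached; the second branch of the ``Moreover'' clause describes exactly this situation. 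The paper's route is the opposite of yours: regularity plus non-wandering leaves only finitely many non-closed proper orbits, so a non-closed $O^{+}_{\mathrm{ex}}(x)$ must contain a locally dense orbit $O^{+}$, and an arbitrarily thin flow-box neighborhood of a curve in $O^{+}_{\mathrm{ex}}(x)$ from $x$ to $O^{+}$, chosen to avoid the finitely many heteroclinic connections, gives $x\in\overline{O^{+}}\subseteq\overline{O^{+}_{\mathrm{ex}}(x)-O^{+}(x)}$. Some such argument is unavoidable; chain-closing alone cannot cover the locally dense case.

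The same oversight invalidates your proof of the dichotomy. You place every $x\in\mathrm{P}$ into the ``$O_{\mathrm{ex}}(x)$ closed'' branch by asserting that $O_{\mathrm{ex}}(x)$ is an extended periodic orbit and citing Lemma \ref{lem0}; but an extended periodic orbit is compact by definition, so this begs the question, and the assertion is false: in the paper's genus-$2$ example (an irrational torus glued to a rational torus along a circle carrying two saddles and two heteroclinic connections), a point on a heteroclinic connection lies in $\mathrm{P}$ yet has a non-closed extended orbit, because the extension picks up a locally dense separatrix. The second branch of the dichotomy is realized precisely by such points of $\mathrm{P}$, not only by locally dense $x$; the correct case split is on whether $O_{\mathrm{ex}}(x)$ meets $\mathrm{LD}$, not on the orbit type of $x$ itself.
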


\begin{proof} 
Suppose that $v$ is regular. 
The regularity implies that 
each singularity is either 
a center, 
a saddle, 
a sink, 
or 
a source. 
By the non-wandering property, 
we have that 
there are no limit cycles 
and 
that 
each singularity is 
either 
a center or a saddle. 
By the regularity, 
the set of saddles is finite. 
Since the omega (resp. alpha) limit set of each non-closed proper orbit is 
a saddle, 
we have that 
the set of non-closed proper orbits are finite. 
It suffices to show that 
each point 
$x \in S$ whose extended orbit is not closed but proper is extended recurrent. 
Indeed, 
we may assume that 
there are no $y \in O^+_\mathrm{{ex}}(x) - O^+(x)$ 
such that 
$x \notin {O^+_\mathrm{{ex}}(y)}$. 
%
Since each saddle has two local (un)stable manifolds, 
both  
$O^+_\mathrm{{ex}}(x)$ and 
$O^-_\mathrm{{ex}}(x)$ are not closed. 
Since the union of non-closed proper orbits is finite and 
since each non-closed proper orbit is a saddle connection,  
we have that 
$O^+_\mathrm{{ex}}(x)$ 
(resp. $O^-_\mathrm{{ex}}(x)$) contains 
a locally dense orbit. 
Let $U$ be a \nbd of $O_\mathrm{{ex}}(x)$ 
such that 
$U - O_\mathrm{{ex}}(x)$ contains no singularities. 
By the finiteness of saddles, 
there is an arbitrary thin connected open subset $U_x \subseteq U$ which is 
disjoint from the union of heteroclinic connections in $O^+_\mathrm{{ex}}(x)$ 
and whose closure contains a curve $C^+$ in $O^+_\mathrm{{ex}}(x)$ from $x$ 
to a point in a locally dense orbit $O^+$ 
such that the orientations of $C^+$ and $O^+_\mathrm{{ex}}(x)$ are same. 
By locally density, 
we have 
$C^+ \subset \overline{U_x \cap \overline{O^+}}$
and so 
that $x \in \overline{O^+} \subseteq \overline{O^+_\mathrm{{ex}}(x) - O^+(x)}$. 
By the symmetry, 
this implies that $x$ is extended recurrent 
and so 
$v$ is extended recurrent. 

Conversely, 
suppose that 
$v$ is extended recurrent and has finitely many singularities.
By the finiteness of singularities, 
we have 
$\overline{\mathrm{Per}(v) \sqcup \mathrm{LD}} = S$. 
Since 
each connected component $C$ of 
the boundary of $\mathrm{Per}(v)$ consists of 
proper orbits and finitely many singularities,  
by the extended recurrence, 
we have that 
$C$ is 
either a center 
or a closed extended orbit 
and so 
each singularity contained in $C$ is a center or a saddle. 
On the other hand, 
the boundary of $\mathrm{LD}$ consists of 
proper orbits and finitely many singularities. 
The extended recurrence implies that 
each singularity in the boundary is a saddle. 
Thus $v$ is regular. 
\end{proof}

Now we describe an $\R$-action 
which has non-closed extended orbits and 
which is not recurrent but extended recurrent. 
Consider 
an irrational rotation on $\T^2$ 
and a rational rotation on $\T^2$. 
Removing a point from each torus, 
paste the metric completions of them such that 
the intersection is a circle which consists of 
two saddles and two heteroclinic connections. 
Then we obtain an extended recurrent $\R$-action on 
a closed oriented surface with genus $2$ which 
is not recurrent and has non-closed extended orbits. 
Notice that 
this example shows also that 
the extended orbits are different from 
chain recurrent components. 
We obtain the following dichotomy for extended recurrent $\R$-actions.

\begin{lemma}\label{prop11} 
Suppose that $v$ is extended recurrent. 
For any point $x$ 
whose extended orbit is not closed, 
either 
there is a singular point in $\overline{O_\mathrm{{ex}}(x)}$ which is not a saddle 
or 
there is a locally dense orbit $O$ such that 
$O_\mathrm{{ex}}(x) \cap \overline{O} \neq \emptyset$. 
\end{lemma}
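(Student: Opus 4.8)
The plan is to argue by contradiction: assume that the extended orbit $O_\mathrm{{ex}}(x)$ is not closed and that \emph{both} alternatives fail, so that (1) every singularity in $\overline{O_\mathrm{{ex}}(x)}$ is a saddle and (2) no locally dense orbit $O$ satisfies $O_\mathrm{{ex}}(x) \cap \overline{O} \neq \emptyset$; I will deduce that $O_\mathrm{{ex}}(x)$ is in fact closed, a contradiction. First I would record the consequences of extended recurrence that are already available: by Lemma \ref{lem2} the action $v$ is non-wandering, so (by the remark following Lemma \ref{lem23}, via Lemma 2.1 of \cite{Y}) there are no exceptional orbits, and by Lemma \ref{lem00} there are no extended limit cycles. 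Assumption (2) then forces every regular orbit contained in $O_\mathrm{{ex}}(x)$ to be proper, since it can be neither exceptional nor locally dense.

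Next I would control the singularities of $K := \overline{O_\mathrm{{ex}}(x)}$. By (1) every singularity in $K$ is a saddle, and by the proof of Lemma \ref{lem0a} the saddles are isolated among singularities. Hence if $K$ contained infinitely many saddles they would accumulate, by compactness, at a point $p^\ast \in K$; since $\mathrm{Sing}(v)$ is closed, $p^\ast$ is a singularity, and isolation forces $p^\ast$ not to be a saddle, contradicting (1). Thus $K$ contains only finitely many saddles.

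The heart of the argument is to identify the limit sets of the orbits making up $O_\mathrm{{ex}}(x)$. For a proper orbit $O' \subseteq O_\mathrm{{ex}}(x)$ the sets $\omega(O')$ and $\alpha(O')$ are nonempty, connected, compact and invariant, and lie in $K$. They cannot contain a periodic orbit, for that would produce an extended limit cycle, contradicting non-wandering via Lemma \ref{lem00}. They cannot contain a locally dense orbit $O$ either: if $\omega(O') \cap O \neq \emptyset$ then $\omega(O') \supseteq \overline{O}$, whose interior $\mathrm{int}\,\overline{O}$ is nonempty and invariant; as $O'$ accumulates on every point of this open set it must enter it, whence $O' \subseteq \overline{O}$ and $O_\mathrm{{ex}}(x) \cap \overline{O} \supseteq O' \neq \emptyset$, contradicting (2). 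By (1) they contain no non-saddle singularity, and by the structure of limit sets of non-wandering surface flows together with the absence of extended limit cycles, the only remaining possibility is that each of $\omega(O')$ and $\alpha(O')$ is a single saddle, exactly as in the proof of Proposition \ref{lem23}.

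Consequently $K$ is a finite graph whose vertices are the finitely many saddles and whose edges are the saddle connections joining them, i.e. a finite union of proper orbits and saddles; in particular $K$ is closed. Since $O_\mathrm{{ex}}(x)$ is connected (each extension attaches the separatrices $W^u$, $W^s$ of a saddle already reached, all meeting that saddle) and, by its very construction, contains every separatrix of every saddle it meets, it exhausts this finite connected graph, so $O_\mathrm{{ex}}(x) = K$ is closed, the desired contradiction. I expect the main obstacle to be the penultimate step, namely excluding the more complicated connected limit sets and showing that $\omega(O')$ and $\alpha(O')$ reduce to single saddles; this is precisely where the hypotheses (1), (2) and the non-existence of extended limit cycles must all be combined with the Poincar\'e--Bendixson description of limit sets on surfaces, and where the quasi-minimality ``enter-the-interior'' argument for locally dense orbits is essential.
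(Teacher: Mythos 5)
Your argument is correct in substance, and at bottom it is the contrapositive reorganization of the paper's proof: the paper argues directly by two cases ($O_\mathrm{{ex}}(x)\cap\overline{\mathrm{LD}}\neq\emptyset$, where Ma\v \i er's theorem yields a single locally dense $O$ with $O_\mathrm{{ex}}(x)\cap\overline{O}\neq\emptyset$, versus $O_\mathrm{{ex}}(x)\cap\overline{\mathrm{LD}}=\emptyset$, where non-closedness forces infinitely many saddles whose accumulation point is a non-saddle singularity), and your ``finitely many saddles $\Rightarrow$ $K$ is a finite closed graph'' step is exactly the paper's ``non-closed $\Rightarrow$ infinitely many saddles'' step run backwards. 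The genuine divergence is at the step you yourself flag as the main obstacle: showing that $\omega(O')$ and $\alpha(O')$ are single saddles. You get there by excluding periodic orbits (via Lemma \ref{lem00}), locally dense orbits (your ``enter-the-interior'' argument, which is fine and, as a bonus, lets you bypass the appeal to Ma\v \i er's theorem), and non-saddle singularities, and then invoking a Poincar\'e--Bendixson-type classification to rule out nontrivial graphs/polycycles; on a higher-genus surface the assertion that such a graph limit set is a union of simple closed curves (hence an extended limit cycle) is a real classical theorem you would need to cite. The paper gets the same sub-claim for free from the definition: a point $x'$ of a non-closed proper orbit is not positively recurrent, so extended positive recurrence forces $x'\in\overline{O^+_\mathrm{{ex}}(x')-O^+(x')}$, hence $O^+_\mathrm{{ex}}(x')\neq O^+(x')$, and by construction the extension is nontrivial only when $\omega(x')$ is a saddle. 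You should also repair one inaccurate sentence: $O_\mathrm{{ex}}(x)$ does \emph{not} in general contain every separatrix of every saddle it meets (the paper explicitly notes the extended-orbit relation is not transitive); for the closedness of $O_\mathrm{{ex}}(x)$ you only need the weaker, true statement that each orbit of $O_\mathrm{{ex}}(x)$ has its (saddle) limit points inside $O_\mathrm{{ex}}(x)$, which follows since $s\in W^u(s)$ and each added orbit has a saddle already in the extension as its $\alpha$-limit.
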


\begin{proof} 
Since extended recurrence implies non-wandering property, 
there are no exceptional orbits and 
$\overline{\mathrm{LD} \sqcup \mathrm{Per}(v)} \supseteq  S- \mathrm{Sing}(v)$.
Suppose that 
$O_\mathrm{{ex}}(x) \cap \overline{\mathrm{LD}} = \emptyset$. 
Then 
$O_\mathrm{{ex}}(x)$ consists of proper orbits and saddles. 
The extended recurrence implies that 
the omega (resp. alpha) limit set of each proper orbit in $O_\mathrm{{ex}}(x)$ 
is a saddle. 
The non-closedness of $O_\mathrm{{ex}}(x)$ 
implies that 
$O_\mathrm{{ex}}(x)$ contains infinitely many saddles. 
Since saddles are isolated, 
a convergence point of saddles is a singular point which is not a saddle. 
This singularity is desired. 
Suppose that 
$O_\mathrm{{ex}}(x) \cap \overline{\mathrm{LD}} \neq \emptyset$.  
By the Ma\v \i er Theorem  \cite{M}, 
the set of closures of locally dense orbits is finite 
and so there is a locally dense orbit $O$ such that 
$O_\mathrm{{ex}}(x) \cap \overline{O} \neq \emptyset$. 
\end{proof}

Note that 
there is an extended recurrent $\R$-action with 
a non-closed proper extended orbit with infinitely many saddles on a disk. 
Indeed, 
let $S := \{ (x, y) \in \R^2 \mid x^2 + y^2 \leq 2 \}$. 
Consider circles $S_n := \{ (x, y) \in \R^2 \mid (x -3/2^n)^2 + y^2 = 2^{-n} \}$ 
for each $n \geq 2 \in \mathbb{Z}$. 
Let $O := \cup_{n \geq 2} S_n$.  
Define an $\R$-action $v$ with an extended orbit $O$  
such that 
the origin is a fixed point which is not a saddle, 
the outside of $\overline{O}$ consists of periodic orbits,   
and 
each open disk bounded by $S_n$ 
is a center disk. 
Then $v$ is extended recurrent and 
has one non-closed proper extended orbit $O$ 
with infinitely many saddles.

\section{Pointwise almost periodicity}

We define extended versions of pointwise almost periodicity. 
An $\R$-action $v$ on a topological space $X$ is said to be 
extended pointwise almost periodic (extended p.a.p.)
if 
the set $\{\overline{O_\mathrm{{ex}}(x)} \mid x \in X \}$ 
of closures of 
extended orbits is a decomposition of $X$.

\begin{lemma}\label{lem21} 
If $v$ is an extended p.a.p. $\R$-action on a compact surface, 
then 
$v$ is extended recurrent  
and $| \mathrm{Sing}(v) \cap \overline{O_\mathrm{{ex}}(x)}| < \infty$ for each $x \in S$. 
\end{lemma}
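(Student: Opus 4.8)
The plan is to extract one structural fact from the hypothesis and use it throughout. Since $\{\overline{O_\mathrm{{ex}}(z)}\mid z\in S\}$ is a decomposition, distinct members are disjoint, so for every $z\in K:=\overline{O_\mathrm{{ex}}(x)}$ the point $z$ lies in both $\overline{O_\mathrm{{ex}}(z)}$ and $K$, forcing $\overline{O_\mathrm{{ex}}(z)}=K$; thus $K$ is the common extended-orbit closure of each of its points. First I would settle finiteness. If $K$ met $\mathrm{Sing}(v)$ in infinitely many points, then, $K$ being compact and $\mathrm{Sing}(v)$ closed, these would accumulate at some $p\in K\cap\mathrm{Sing}(v)$ that is not isolated in $\mathrm{Sing}(v)$; by the neighborhood property of saddles used in Lemmas \ref{lem0a} and \ref{lem023}, $p$ is not a saddle. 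For a non-saddle singularity the extension never fires, so $O_\mathrm{{ex}}(p)=\{p\}$ and $\overline{O_\mathrm{{ex}}(p)}=\{p\}$; the workhorse then gives $K=\{p\}$, contradicting infinitude. Hence $|\mathrm{Sing}(v)\cap K|<\infty$, and the same computation yields the dichotomy I will use below: either $K=\{x\}$ is a single non-saddle singularity, or every singularity in $K$ is a saddle.

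For extended recurrence I fix $x$ and prove extended positive recurrence, the negative case being symmetric. Singularities and positively recurrent points are immediate, so I assume $x$ regular with $x\notin\omega(x)$, so that by the dichotomy $K$ contains only finitely many saddles as its singularities. The goal is to exhibit inside $O^+_\mathrm{{ex}}(x)$ an orbit distinct from $O(x)$ whose closure contains $x$; since such an orbit lies in $O^+_\mathrm{{ex}}(x)-O^+(x)$, this gives $x\in\overline{O^+_\mathrm{{ex}}(x)-O^+(x)}$. I would organize the argument along the alternative of Lemma \ref{prop11}, according to whether $O_\mathrm{{ex}}(x)$ meets $\overline{\mathrm{LD}}$.

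If $O_\mathrm{{ex}}(x)\cap\overline{\mathrm{LD}}=\emptyset$, then $O_\mathrm{{ex}}(x)$ consists of proper orbits and saddles; a non-closed such set accumulates infinitely many saddles (as in Lemma \ref{prop11}), contradicting finiteness, so $O_\mathrm{{ex}}(x)=K$ is an extended periodic orbit and, by Lemma \ref{lem0}, a finite union of proper orbits and saddles. Here a periodic orbit cannot occur, for by the workhorse it would force $K$ to be that periodic orbit and $x$ to be periodic; a non-saddle or locally dense $\omega$-limit of a separatrix is excluded by the finiteness claim and the case assumption. Thus every separatrix of every saddle of $K$ is a saddle connection, so the connection digraph on the finitely many saddles is connected with equal in- and out-degree at each vertex, hence strongly connected; the forward extension then sweeps out all of $K$, whence $O^+_\mathrm{{ex}}(x)=K$ and the points $v_{-\varepsilon}(x)\in O^+_\mathrm{{ex}}(x)-O^+(x)$ accumulate at $x$. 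If instead $O_\mathrm{{ex}}(x)\cap\overline{\mathrm{LD}}\neq\emptyset$, then by Ma\v \i er's theorem \cite{M} there are finitely many quasiminimal sets; choosing a locally dense $y$ in one of them that meets $O_\mathrm{{ex}}(x)$ and applying the workhorse to $y$ gives $K=\overline{O(y)}\ni x$. I would then show $\omega(x)$ is a single saddle $s\in K$, so that $W^u(s)\subseteq O^+_\mathrm{{ex}}(x)$ adjoins a separatrix dense in the quasiminimal set $K$, and finish with the thin-tube local-density argument of Proposition \ref{lem23} to conclude $x\in\overline{O^+_\mathrm{{ex}}(x)-O^+(x)}$.

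The main obstacle is the last step of the locally dense case: ruling out that $\omega(x)$ is a separatrix polygon rather than a single saddle. If it were a polygon the forward extension would never fire and $O^+_\mathrm{{ex}}(x)=O^+(x)$ would make $x$ fail extended positive recurrence, so this possibility must be excluded from the decomposition hypothesis in the spirit of Lemma \ref{lem00}; and one must know that the unstable separatrix of $s$ lying in the quasiminimal set is genuinely dense there before the argument of Proposition \ref{lem23} applies. By comparison the finiteness claim, the purely proper case, and the reduction are routine once the workhorse is in hand.
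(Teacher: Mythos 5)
Your ``workhorse'' observation (that the partition hypothesis forces $\overline{O_\mathrm{{ex}}(z)}=\overline{O_\mathrm{{ex}}(x)}$ for every $z$ in that closure) is exactly the lever the paper uses, and your finiteness argument --- accumulation of singularities yields a non-saddle singularity $p$, whence $\overline{O_\mathrm{{ex}}(p)}=\{p\}$ collapses the whole class --- is essentially the paper's first paragraph and is fine.

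The recurrence half, however, has a genuine gap, and it sits at the one point where the lemma is actually nontrivial: you never prove that for a regular, non-recurrent $x$ the set $\omega(x)$ is a \emph{single saddle}. If $\omega(x)$ is instead a separatrix polygon, a periodic orbit, or a one-sided limit continuum inside a quasiminimal set, the inductive definition of $O^+_{\mathrm{ex}}$ never fires, $O^+_\mathrm{{ex}}(x)=O^+(x)$, and extended positive recurrence fails outright; so this is not a technical loose end but the statement's entire content for such $x$. You name this yourself as ``the main obstacle'' and defer it to ``the spirit of Lemma \ref{lem00},'' but Lemma \ref{lem00} concerns extended limit cycles and does not by itself exclude these configurations. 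The paper closes precisely this hole with a dedicated step: if the relevant limit set of some $z\in\overline{O_\mathrm{{ex}}(x)}$ is not a saddle, then $O_\mathrm{{ex}}(z)=O(z)$, $O(z)\cap\omega(z)=\emptyset$, while $\overline{O_\mathrm{{ex}}(x)}\subseteq\omega(z)$, forcing $z\notin\overline{O_\mathrm{{ex}}(x)}$ --- a contradiction with the partition property. Some version of that argument (or another explicit use of the p.a.p.\ hypothesis) must appear before your thin-tube endgame can run. Secondarily, your Case A assertion that $O_\mathrm{{ex}}(x)$ consists of proper orbits and saddles tacitly excludes exceptional orbits, which requires first establishing that $v$ is non-wandering (the paper does this via countability of $\mathrm{P}$ and the flow box theorem); you should add that step as well.
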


\begin{proof} 
Fix each regular point $y \in S$ 
such that 
$O_\mathrm{{ex}}(y)$ contains singularities. 
By definition of extended orbits, 
the singularities in $O_\mathrm{{ex}}(y)$ 
are saddles. 
If 
$\overline{O_\mathrm{{ex}}(y)}$ contains 
a singular point $p$ which is not a saddle, then  
$\overline{O_\mathrm{{ex}}(p)} 
= \{ p \}  \subsetneq \overline{O_\mathrm{{ex}}(y)}$
which contradicts to the extended p.a.p..
Thus 
$\overline{O_\mathrm{{ex}}(y)} \cap \mathrm{Sing}(v)$ 
consists of finitely many saddles.  
%
%
%
%
%
%
%
Since the set of saddles is countable, 
the extended p.a.p. property implies that 
$\mathrm{P}$ consists of countably many orbits. 
The flow box theorem implies that 
$\mathrm{P} \subset \overline{\mathrm{E} \sqcup \mathrm{LD} \sqcup \mathrm{Per}(v)}$ 
and so that 
$v$ is non-wandering. 
Fix any point $x \in \mathrm{P}$ whose extended orbit is 
not closed. 
We show that 
either omega or alpha limit set  $L$ of $x$ 
is a saddle. 
Otherwise 
there is a point $z \in \overline{O_\mathrm{{ex}}(x)}$ 
whose omega (resp. alpha) limit set is not a saddle. 
Then 
$O(z) = O_\mathrm{{ex}}(z)$ 
and 
$O(z) \cap \omega(z) = \emptyset$.  
On the other hand, 
$O_\mathrm{{ex}}(x) \subseteq \omega(z)$ 
and so 
$\overline{O_\mathrm{{ex}}(x)} \subseteq \omega(z)$. 
Since $z \notin  \omega(z)$, 
we have 
$z \notin \overline{O_\mathrm{{ex}}(x)}$, 
which contradicts to the choice of $z$.  
%
%
%
%
%
%
%
%
%
%
%
Since $| \mathrm{Sing}(v) \cap \overline{O_\mathrm{{ex}}(x)}| < \infty$, 
each of  $O^+_\mathrm{{ex}}(x)$ and 
$O^-_\mathrm{{ex}}(x)$
contains locally dense orbits. 
By symmetry, 
it suffices to show that 
$x \in \overline{O^+_\mathrm{{ex}}(x) - O^+(x)}$.  
Indeed, 
we may assume that 
there is no point $y \in O^+_\mathrm{{ex}}(x) - O^+(x)$ with 
$x \in O^+_\mathrm{{ex}}(y)$. 
Since 
$\mathrm{Sing}(v) \cap \overline{O_\mathrm{{ex}}(x)}$ consists of finitely many saddles, 
there is a thin connected open subset $U_x$ without singularities 
whose closure contains a curve in $O^+_\mathrm{{ex}}(x)$ from $x$ 
to a point $w \in \mathrm{LD}$
such 
that 
the orientations of the curve and $O^+_\mathrm{{ex}}(x)$ are compatible. 
Then 
$x \in 
\overline{O^+(w)} 
\subseteq 
\overline{O^+_\mathrm{{ex}}(x) - O^+(x)}$. 
%
%
\end{proof}

In the case without locally dense orbits, 
the following equivalence holds.

\begin{proposition}\label{prop32} 
Suppose that $v$ is a non-identical $\R$-action 
 without locally dense orbits 
on a compact surface $S$. 
The following are equivalent: 
\\
1) 
$v$ is extended p.a.p.. 
\\
2) 
$v$ is extended recurrent and $| \mathrm{Sing}(v) \cap \overline{O_\mathrm{{ex}}(x)}| 
< \infty$ for each $x \in S$. 
\\ 
3)
$v$ consists of closed extended orbits. 
\end{proposition}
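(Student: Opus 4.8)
The plan is to prove the equivalence of the three conditions by showing a cycle of implications $1) \Rightarrow 2) \Rightarrow 3) \Rightarrow 1)$, exploiting the hypothesis that there are no locally dense orbits throughout. The implication $1) \Rightarrow 2)$ is already essentially done: Lemma~\ref{lem21} states that extended p.a.p. implies both extended recurrence and the finiteness of singularities in each extended orbit closure, so this direction requires only a citation. The substance of the proposition lies in the remaining two implications, where the absence of locally dense orbits becomes decisive.

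For $2) \Rightarrow 3)$, I would argue by contradiction. Suppose $v$ is extended recurrent with $|\mathrm{Sing}(v) \cap \overline{O_\mathrm{{ex}}(x)}| < \infty$ for each $x$, but some extended orbit $O_\mathrm{{ex}}(x)$ is not closed. I would invoke Lemma~\ref{prop11}: since $O_\mathrm{{ex}}(x)$ is not closed, either $\overline{O_\mathrm{{ex}}(x)}$ contains a singularity that is not a saddle, or there is a locally dense orbit $O$ with $O_\mathrm{{ex}}(x) \cap \overline{O} \neq \emptyset$. The hypothesis $|\mathrm{Sing}(v) \cap \overline{O_\mathrm{{ex}}(x)}| < \infty$ together with the definition of extended orbits rules out a non-saddle singularity in the closure (by the argument in Lemma~\ref{lem21}, the limit of infinitely many saddles would be a non-saddle singularity, but finiteness forbids infinitely many saddles, and a non-saddle singularity cannot otherwise appear). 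The second alternative is excluded outright by the standing assumption that $v$ has no locally dense orbits. Hence no extended orbit can fail to be closed, giving condition~3).

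For $3) \Rightarrow 1)$, I would show directly that the family $\{\overline{O_\mathrm{{ex}}(x)} \mid x \in S\}$ forms a decomposition of $S$, i.e. that any two such closures are either equal or disjoint. Since every extended orbit is already closed, $\overline{O_\mathrm{{ex}}(x)} = O_\mathrm{{ex}}(x)$, so it suffices to show that distinct extended orbits are disjoint. Here I would use the structural description from Lemma~\ref{lem0}: a closed extended orbit that is not a single point is an extended periodic orbit consisting of finitely many proper orbits and saddles. The main point is that the extended orbit relation, although merely reflexive and symmetric in general (as noted in the introduction), becomes an equivalence relation once all extended orbits are closed, because then membership $y \in O_\mathrm{{ex}}(x)$ forces $O_\mathrm{{ex}}(y) = O_\mathrm{{ex}}(x)$; the transitivity failure observed in the introduction stems precisely from the stable/unstable manifold accumulation that cannot occur when orbits close up. Once the decomposition property is verified, condition~1) follows by definition.

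The main obstacle I anticipate is the verification of transitivity in the final implication, i.e. establishing that $y \in O_\mathrm{{ex}}(x)$ implies $O_\mathrm{{ex}}(y) = O_\mathrm{{ex}}(x)$ when all extended orbits are closed. The subtlety is that $O_\mathrm{{ex}}(x)$ is built by a transfinite union over successor and limit ordinals, attaching unstable manifolds $W^u(\omega(x'))$ at saddles, so I must argue that closedness collapses this transfinite construction: a closed extended orbit contains only finitely many saddles (Lemma~\ref{lem0}), so the construction terminates at a finite stage and the saddle connections form a finite closed invariant graph whose associated extended orbit is symmetric in all its points. Handling the bookkeeping of the $O^+$ and $O^-$ pieces and confirming that the finite saddle-connection structure makes the relation genuinely symmetric-and-transitive is where care is needed, but the finiteness supplied by Lemmas~\ref{lem0a} and~\ref{lem0} should make it tractable.
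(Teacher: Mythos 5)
Your proposal is correct in outline and follows the same cycle of implications as the paper ($3)\Rightarrow 1)$, $1)\Rightarrow 2)$ via Lemma~\ref{lem21}, $2)\Rightarrow 3)$ via Lemma~\ref{prop11} with the no-locally-dense hypothesis killing the second alternative), but you close the key step $2)\Rightarrow 3)$ by a different mechanism. The paper, having obtained a non-saddle singularity $z\in\overline{O_\mathrm{{ex}}(x)}$ from Lemma~\ref{prop11}, uses extended recurrence to show $\{z\}$ is neither $\alpha(y)$ nor $\omega(y)$ for any $y\neq z$ and derives a contradiction from the Ura--Kimura--Bhatia theorem; notably this does not use the finiteness hypothesis at all in that step. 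You instead exclude the non-saddle singularity by counting: a single such singularity is of course compatible with $|\mathrm{Sing}(v)\cap\overline{O_\mathrm{{ex}}(x)}|<\infty$, so your argument really rests on the assertion that it can only arise as a limit of infinitely many saddles --- which is true here, but only because of the internal argument of Lemma~\ref{prop11} (extended recurrence forces each proper orbit in $O_\mathrm{{ex}}(x)$ to have saddle limit sets, and non-closedness then forces infinitely many saddles in $O_\mathrm{{ex}}(x)$, contradicting finiteness directly); you attribute this to Lemma~\ref{lem21}, but the relevant argument lives in the proof of Lemma~\ref{prop11}, whose \emph{statement} alone does not suffice. Your route is arguably more elementary (no topological-dynamics black box) at the cost of reopening that lemma's proof. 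Finally, your careful treatment of $3)\Rightarrow 1)$ --- using Lemma~\ref{lem0} to show the transfinite construction terminates and the relation becomes an equivalence on closed extended orbits --- addresses a genuine subtlety (the non-transitivity noted in the introduction) that the paper dismisses with ``Obviously''; that extra care is a point in your favor rather than a divergence.
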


\begin{proof} 
Obviously 
$3) \Rightarrow 1)$. 
By Lemma \ref{lem21}, 
we have that 
$1) \Rightarrow 2)$. 
Suppose that 
$2)$ holds. 
Moreover suppose that 
there is a non-closed extended orbit $O_\mathrm{{ex}}(x)$. 
By Lemma \ref{prop11}, 
there is a singularity $z$ in $\overline{O_\mathrm{{ex}}(x)}$
which is not a saddle.  
The extended recurrence implies that 
$\{ z \} \neq \alpha(y)$ 
and 
$\{ z \} \neq \omega(y)$ 
for any $y \neq z \in S$.  
This contradicts to 
the Ura-Kimura-Bhatia theorem (cf. Theorem 1.6 \cite{B}). 
Thus $v$ consists of closed extended orbits. 
\end{proof}

%
%
Note that 
there is an $\R$-action on a connected closed surface which is 
not extended p.a.p. but extended recurrent 
and whose singularities consists of two saddles. 
Indeed, 
consider 
two irrational rotations on $\T^2$.  
Let $T_1, T_2$ be the metric completions of 
the resulting surfaces by removing one point from each torus. 
Then $T_i$ is homeomorphic to a torus minus an open disk.  
Paste them such that 
the resulting surface $S = T_1 \cup T_2$ is a closed orientable surface with genus $2$ 
and that 
the intersection $T_1 \cap T_2$  
is a circle which consists of 
two saddles and two heteroclinic connections. 
Let $v$ be the resulting $\R$-action on $S$. 
The extended orbit closure of each point of $(\mathrm{int} T_i) \setminus O_\mathrm{{ex}}(x)$ 
for a point $x \in T_1 \cap T_2$ 
is $T_i$, 
and 
the extended orbit closure of each point 
$x \in O_\mathrm{{ex}}(x_1) \cup O_\mathrm{{ex}}(x_2)$ 
is $S$, where any $x_i \in T_i$.  
Then $v$ is not extended p.a.p.. 
The extended recurrence is obviously. 

%

\section{Extended $R$-closedness}

%
%
Define extended versions of $R$-closedness. 
An $\R$-action $v$ on a compact surface $S$ is said to be 
extended $R$-closed if 
$R_{\text{ex}} := \{ (x, y ) \mid  y \in \overline{O_\mathrm{{ex}}(x)} \}$ is closed. 
%
%
%

\begin{lemma}\label{lem24} 
If $v$ is extended $R$-closed, 
then $v$ is extended p.a.p..  
%
\end{lemma}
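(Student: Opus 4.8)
The plan is to show that extended $R$-closedness forces the relation $R_{\text{ex}} := \{(x,y) : y \in \overline{O_\mathrm{{ex}}(x)}\}$ to be a closed equivalence relation on $S$. Once this is known, its sections $R_{\text{ex}}[x] = \overline{O_\mathrm{{ex}}(x)}$ are exactly the equivalence classes, so the family $\{\overline{O_\mathrm{{ex}}(x)} \mid x \in S\}$ is a decomposition of $S$, which is precisely the definition of extended p.a.p. Since $R_{\text{ex}}$ is reflexive (as $x \in O_\mathrm{{ex}}(x)$), it remains to verify symmetry and transitivity.

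Symmetry is where I would use the closedness hypothesis, following the classical template. Given $(x,y) \in R_{\text{ex}}$, choose a net $z_i \in O_\mathrm{{ex}}(x)$ with $z_i \to y$. Because the un-closed relation $\{(a,b) : b \in O_\mathrm{{ex}}(a)\}$ is symmetric (as recorded in the introduction), each $z_i$ satisfies $x \in O_\mathrm{{ex}}(z_i)$, hence $(z_i, x) \in R_{\text{ex}}$. Passing to the limit $(z_i, x) \to (y, x)$ and invoking the closedness of $R_{\text{ex}}$ yields $(y, x) \in R_{\text{ex}}$, i.e. $x \in \overline{O_\mathrm{{ex}}(y)}$.

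Transitivity I would reduce to the saturation statement: if $z \in \overline{O_\mathrm{{ex}}(x)}$ then $O_\mathrm{{ex}}(z) \subseteq \overline{O_\mathrm{{ex}}(x)}$ (taking closures then gives $\overline{O_\mathrm{{ex}}(z)} \subseteq \overline{O_\mathrm{{ex}}(x)}$, which is transitivity). Since $O_\mathrm{{ex}}(x)$ is a union of full orbits, its closure is flow-invariant, so the ordinary orbit and the $\omega$- and $\alpha$-limit sets of any $z \in \overline{O_\mathrm{{ex}}(x)}$ already lie in $\overline{O_\mathrm{{ex}}(x)}$. The only remaining point is the unstable-manifold step in the construction of $O_\mathrm{{ex}}$: if a saddle $p$ arises as $\omega(z')$ for some $z' \in \overline{O_\mathrm{{ex}}(x)}$, I must show $W^u(p) \subseteq \overline{O_\mathrm{{ex}}(x)}$. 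This saddle analysis is where I expect the main difficulty. I would argue with the flow box theorem and the local saddle normal form, using that saddles are isolated (Lemma \ref{lem0a}): pick a net $z'_i \in O_\mathrm{{ex}}(x)$ with $z'_i \to z'$. If cofinally the $z'_i$ lie off the stable manifold $W^s(p)$, then their forward orbits are forced through the saddle box and shadow the unstable separatrices arbitrarily closely, so $W^u(p) \subseteq \overline{O_\mathrm{{ex}}(x)}$. Otherwise the $z'_i$ eventually lie on a stable separatrix orbit $\gamma$; by flow-invariance $\gamma \subseteq O_\mathrm{{ex}}(x)$, and since $\omega(\gamma) = \{p\}$ the construction of $O_\mathrm{{ex}}$ already places $W^u(p) \subseteq O_\mathrm{{ex}}(x)$. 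Either way the saturation holds, so transitivity does not itself need the closedness hypothesis.

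Finally, combining symmetry and transitivity shows $R_{\text{ex}}$ is a closed equivalence relation. If $\overline{O_\mathrm{{ex}}(x)} \cap \overline{O_\mathrm{{ex}}(y)} \neq \emptyset$, say both contain a point $p$, then symmetry and transitivity give $\overline{O_\mathrm{{ex}}(x)} = \overline{O_\mathrm{{ex}}(p)} = \overline{O_\mathrm{{ex}}(y)}$; hence distinct extended orbit closures are disjoint, and since they cover $S$ the family $\{\overline{O_\mathrm{{ex}}(x)}\}$ is a decomposition, i.e. $v$ is extended p.a.p. The step I would scrutinize most carefully is the saddle shadowing: checking that a topological saddle (only assumed locally homeomorphic to a $C^1$ saddle) genuinely forces the $\lambda$-lemma-type shadowing of $W^u(p)$, and handling the degenerate configurations, such as saddle connections, in which $z'$ may itself lie in a limit set.
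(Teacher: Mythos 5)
Your symmetry step is exactly the paper's, but your transitivity step contains a genuine gap: you assert that the saturation property (if $z \in \overline{O_\mathrm{{ex}}(x)}$ then $O_\mathrm{{ex}}(z) \subseteq \overline{O_\mathrm{{ex}}(x)}$) ``does not itself need the closedness hypothesis.'' That cannot be right, and the paper warns you of it in the introduction: the relation $\{(x,y) \mid y \in O_\mathrm{{ex}}(x)\}$ is reflexive and symmetric but \emph{not} transitive in general, and passing to closures does not repair this for free. Concretely, suppose $x$ lies on a stable separatrix of a saddle $p$, so $\omega(x)=\{p\}$ and the forward construction puts $W^u(p)=\{z \mid \alpha(z)=\{p\}\}$ into $O^+_\mathrm{{ex}}(x)$. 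Take $y$ on an unstable separatrix of $p$; then $y \in O^+_\mathrm{{ex}}(x)$, but $\alpha(y)=\{p\}$, so $O^-_\mathrm{{ex}}(y)$ contains all of $W^s(p)$, including the \emph{other} stable separatrix of $p$, which in general is nowhere near $\overline{O_\mathrm{{ex}}(x)}$. Your case analysis papers over exactly this configuration: when the approximating points lie on a separatrix $\gamma \subseteq O_\mathrm{{ex}}(x)$ you claim ``the construction of $O_\mathrm{{ex}}$ already places'' the opposite invariant set inside $O_\mathrm{{ex}}(x)$, but the construction never does that --- $O^+_\mathrm{{ex}}$ only ever adjoins unstable sets of $\omega$-limit saddles and $O^-_\mathrm{{ex}}$ only ever adjoins stable sets of $\alpha$-limit saddles, and neither iteration is applied to the other's output. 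So your argument would prove the (false) statement that every surface flow with saddles has a transitive extended-orbit-closure relation; the hypothesis of extended $R$-closedness must enter the transitivity step, not only the symmetry step.

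The paper's route is genuinely different here and shows where the hypothesis is really spent. After establishing symmetry of $R_{\text{ex}}$ from closedness (as you do), it first deduces that $v$ is non-wandering (finitely many singularities in each extended orbit closure, $\mathrm{P}$ countable, flow box theorem), which yields the global structure: no exceptional orbits and density of recurrent points. Then, for $y \in O^+_\mathrm{{ex}}(x)$ with $O_\mathrm{{ex}}(x)$ not closed, it locates a locally dense orbit $O^+(z)$ inside $O^+_\mathrm{{ex}}(y)$ and a recurrent point $w \in \mathrm{int}\,\overline{O^+(z)}$, and applies the \emph{symmetry of $R_{\text{ex}}$} (hence the closedness hypothesis) to each $z' \in O^-_\mathrm{{ex}}(z)$: from $w \in \overline{O_\mathrm{{ex}}(z')}$ it concludes $z' \in \overline{O_\mathrm{{ex}}(w)} = \overline{O^+(w)} \subseteq \overline{O^+(z)} \subseteq \overline{O_\mathrm{{ex}}(x)}$, which is precisely the step that recaptures the ``wrong-direction'' separatrices your local saddle analysis loses. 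If you want to salvage your outline, the fix is to replace the local $\lambda$-lemma shadowing by this global argument (or some other explicit use of closedness) in the transitivity step.
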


\begin{proof} 
First we show that 
$R_{\text{ex}}$ is symmetric. 
Indeed, 
the definition of extended orbits implies that 
$\{ (x, y ) \mid  y \in {O_\mathrm{{ex}}(x)} \}$ is symmetric. 
For any $y \in \overline{O_\mathrm{{ex}}(x)}$, 
let $( y_n)$ be a sequence of points in ${O_\mathrm{{ex}}(x)}$ 
converging to $y$. 
Since $x \in {O_\mathrm{{ex}}(y_n)}$, 
we have 
$(y_n, x) \in R_{\text{ex}}$. 
The extended $R$-closedness 
implies  
$(y,  x) \in R_{\text{ex}}$ 
and so 
$x \in \overline{O_\mathrm{{ex}}(y)}$.  
%
%
The closure of 
each extended orbit contains at most finitely many singularities  
and 
either $\omega(x)$ or 
$\alpha(x)$ is a saddle 
for any $x \in \mathrm{P}$. 
Hence $\mathrm{P}$ consists of at most countably many orbits.  
By the flow box theorem, 
we obtain that 
$\mathrm{P} \subset \overline{\mathrm{LD} \sqcup \mathrm{Per}(v) \sqcup \mathrm{E}}$. 
This implies that $v$ is non-wandering.  
%
Fix any point $x \in S$. 
By symmetry, 
it suffices to show that 
$\overline{O_\mathrm{ex}(y)} \subseteq \overline{O_\mathrm{{ex}}(x)}$ 
for any $y \in O^+_\mathrm{{ex}}(x)$.  
We may assume that ${O_\mathrm{{ex}}(x)}$ is not closed.
Then there is a point $z \in O^+_\mathrm{{ex}}(y)$ whose orbit is locally dense. 
Since the set of recurrent points is dense, 
there is a recurrent point $w \in \mathrm{int} \overline{O^+(z)}$ whose orbit is locally dense. 
For any $z' \in O^-_\mathrm{{ex}}(z)$, 
we have $w \in \overline{O_\mathrm{{ex}}(z')}$ 
and so 
$z' \in \overline{O_\mathrm{{ex}}(w)} 
= \overline{O^+(w)} 
\subseteq \overline{O^+(z)} 
\subseteq \overline{O_\mathrm{{ex}}(x)}$.  
Then 
$\overline{O^-_\mathrm{{ex}}(y)} 
\subseteq \overline{O^-_\mathrm{{ex}}(z)} 
\subseteq \overline{O_\mathrm{{ex}}(x)}$ 
and so 
$\overline{O_\mathrm{{ex}}(y)} 
\subseteq \overline{O_\mathrm{{ex}}(x)}$. 
\end{proof}

For a singular point $x$, 
we call that 
$x$ is an extended center if 
there is a \nbd $U$ of $x$ 
such that $U - \{ x \}$ consists of extended periodic orbits and centers.

\begin{lemma}\label{lem034} 
Suppose that 
$v$ is non-identical extended $R$-closed and 
$S$ is connected. 
Then 
$\overline{\mathrm{LD}} \cap \mathrm{Sing}(v)$ 
is finite 
and 
all singularities are saddles and extended centers.  
\end{lemma}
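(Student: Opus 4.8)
The plan is to run $p$ through the chain of implications already in place and then exploit the resulting decomposition. Since $v$ is extended $R$-closed, Lemma~\ref{lem24} gives that $v$ is extended p.a.p., Lemma~\ref{lem21} then yields that $v$ is extended recurrent with $|\mathrm{Sing}(v)\cap\overline{O_\mathrm{{ex}}(x)}|<\infty$ for every $x\in S$, and Lemma~\ref{lem2} gives that $v$ is non-wandering. In particular there are no exceptional orbits and $\overline{\mathrm{LD}\sqcup\mathrm{Per}(v)}\supseteq S-\mathrm{Sing}(v)$ by Lemma 2.1~\cite{Y}. The object I will use repeatedly is the decomposition $\{\overline{O_\mathrm{{ex}}(x)}\mid x\in S\}$ together with the \emph{block argument}: if a point lies in two of these closures, the two closures coincide.

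For the finiteness assertion I would invoke the Ma\v \i er Theorem~\cite{M}, by which the closures $\overline{O_1},\dots,\overline{O_k}$ of locally dense orbits form a finite family, so $\overline{\mathrm{LD}}=\bigcup_i\overline{O_i}$. Picking $x_i\in O_i$ gives $\overline{O_i}\subseteq\overline{O_\mathrm{{ex}}(x_i)}$, and the latter meets $\mathrm{Sing}(v)$ in a finite set by Lemma~\ref{lem21}; hence $\overline{\mathrm{LD}}\cap\mathrm{Sing}(v)=\bigcup_i(\overline{O_i}\cap\mathrm{Sing}(v))$ is finite.

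For the classification, let $p$ be a singularity that is not a saddle; I must produce a \nbd witnessing that $p$ is an extended center. The definition of extended orbits gives $O_\mathrm{{ex}}(p)=\{p\}$, so $\{p\}$ is a block. If some $y\neq p$ satisfied $\omega(y)=\{p\}$ or $\alpha(y)=\{p\}$, then $p\in\overline{O_\mathrm{{ex}}(y)}$, forcing $\{p\}=\overline{O_\mathrm{{ex}}(y)}$ and hence $y=p$, a contradiction; so no point other than $p$ has $p$ as its sole $\omega$- or $\alpha$-limit, and the Ura--Kimura--Bhatia theorem (Theorem 1.6~\cite{B}) shows $p$ is Lyapunov stable in both time directions. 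The same block argument gives $p\notin\overline{\mathrm{LD}}$, since otherwise $p\in\overline{O_i}\subseteq\overline{O_\mathrm{{ex}}(x_i)}$ would collapse $\{p\}$ onto a larger block.

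Finally I would analyze the local picture. Bilateral stability lets me take arbitrarily small positively and negatively invariant \nbds $U$ of $p$ with $\overline{U}\cap\overline{\mathrm{LD}}=\emptyset$, so every orbit meeting $U$ stays in $U$ and no point of $U$ is locally dense or exceptional; thus every regular point of $U$ lies in $\overline{\mathrm{Per}(v)}$. For regular $y\in U$, if $O_\mathrm{{ex}}(y)$ were not closed, Lemma~\ref{prop11} would force either a non-saddle singularity in $\overline{O_\mathrm{{ex}}(y)}\subseteq\overline{U}$ (impossible by the block argument, as it would make $\overline{O_\mathrm{{ex}}(y)}$ a singleton) or a locally dense orbit meeting it (impossible since $\overline{U}$ avoids $\overline{\mathrm{LD}}$); hence $O_\mathrm{{ex}}(y)$ is a closed extended orbit, which by Lemma~\ref{lem0} is an extended periodic orbit of finitely many proper orbits and saddles. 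Thus $U-\{p\}$ is covered by extended periodic orbits, whose singularities are saddles, provided $U$ contains no non-saddle singularity other than $p$. The hard part will be exactly this last point: ruling out an accumulation of non-saddle singularities at $p$ (or showing such accumulating points are classical centers), equivalently showing that the invariant disks about $p$ can be chosen with boundary a single periodic or extended periodic orbit. This is where the full force of the closedness of $R_\mathrm{{ex}}$ must be brought in, presumably by testing it against sequences of saddles, or of distinct surrounding extended periodic orbits, shrinking to $p$.
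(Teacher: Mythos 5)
Your reduction to extended p.a.p.\ via Lemma~\ref{lem24}, and your finiteness argument (Ma\v \i er's theorem gives finitely many closures $\overline{O_i}$ of locally dense orbits, each contained in some $\overline{O_\mathrm{{ex}}(x_i)}$, which meets $\mathrm{Sing}(v)$ in a finite set by Lemma~\ref{lem21}) are sound, and the latter is a legitimate alternative to the paper's route (the paper instead observes that $\overline{\mathrm{LD}}\cap\mathrm{Sing}(v)$ consists of saddles, which are isolated, hence finite in a compact set). The block argument showing that a non-saddle singularity $p$ satisfies $\overline{O_\mathrm{{ex}}(p)}=\{p\}$ and cannot be the $\omega$- or $\alpha$-limit set of any other point is also correct and useful.

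However, the classification of non-saddle singularities as extended centers is not completed, and the gap you flag at the end is a genuine one, not a routine detail. Your local analysis produces a \nbd $U$ of $p$ in which every \emph{regular} point lies on a closed extended orbit, but the definition of extended center requires $U-\{p\}$ to consist of extended periodic orbits and centers, and nothing in your argument prevents other non-saddle singularities (which are not centers, and whose extended orbits are singletons, hence not extended periodic orbits) from accumulating at $p$. Your appeal to the Ura--Kimura--Bhatia theorem is also fragile for the same reason: it needs $\{p\}$ to be an isolated compact invariant set, which fails precisely in the accumulation scenario. The paper avoids this local difficulty entirely by arguing globally: extended $R$-closedness forces each connected component of the boundary of $\mathrm{Per}(v)$ (resp.\ $\mathrm{LD}$) to lie in a single extended orbit, which yields that $\overline{\mathrm{Per}(v)}\cap\mathrm{Sing}(v)$ consists of saddles and extended centers and $\overline{\mathrm{LD}}\cap\mathrm{Sing}(v)$ consists of saddles; then Lemma~\ref{lem023} makes $\overline{\mathrm{Per}(v)\cup\mathrm{LD}}$ clopen, so connectedness of $S$ gives $S=\overline{\mathrm{Per}(v)\cup\mathrm{LD}}$ and every singularity is caught by one of the two boundary analyses. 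To close your argument you would need either this global covering step or a direct mechanism, extracted from the closedness of $R_{\text{ex}}$, excluding sequences of non-saddle, non-center singularities converging to $p$; as written, neither is supplied.
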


\begin{proof} 
Since $v$ is non-wandering, 
there are no exceptional orbits 
and 
$\overline{\mathrm{Per}(v) \cup \mathrm{LD}} \supseteq S -\mathrm{Sing}(v)$. 
By the extended $R$-closedness, 
we have that 
each connected component of the boundary of 
${\mathrm{Per}(v)}$ 
(resp. $\mathrm{LD}$)
is contained in one extended orbit 
and so that 
$\overline{\mathrm{Per}(v)} \cap \mathrm{Sing}(v)$ consists of saddles and extended centers.  
The extended recurrence also 
implies that 
$\overline{\mathrm{LD}} \cap \mathrm{Sing}(v)$ consists of saddles.  
Since each saddle is isolated, 
we have that 
$\overline{\mathrm{LD}} \cap \mathrm{Sing}(v)$ is finite. 
By Lemma \ref{lem023}, 
$\overline{\mathrm{Per}(v) \cup \mathrm{LD}}$ is clopen 
and so $S = \overline{\mathrm{Per}(v) \cup \mathrm{LD}}$. 
Thus 
each singularity is  either 
a saddle or an extended center. 
\end{proof}

There is an extended $R$-closed flow with infinitely many saddles. 
Indeed, consider a center disk and a converging sequence of periodic orbits  
to the center. 
Replacing the periodic orbits 
by homoclinic saddle connections with center disks, 
we obtain an extended center disk with infinitely many saddles. 
By doubling this disk, we obtain an extended $R$-closed flow 
on $\S^2$ with two extended centers and with infinitely many saddles. 
Consider the case with finitely many singularities. 

\begin{proposition}\label{lem42} 
Suppose 
$|\mathrm{Sing}(v)|< \infty$. 
Then 
$v$ is extended $R$-closed 
if and only if 
$v$ is extended p.a.p..  
\end{proposition}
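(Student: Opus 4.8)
The plan is to prove the nontrivial direction, namely that extended $p.a.p.$ implies extended $R$-closedness, since the converse is already Lemma~\ref{lem24}. First I would invoke Lemma~\ref{lem21} to record that extended $p.a.p.$ gives extended recurrence together with $|\mathrm{Sing}(v)\cap\overline{O_\mathrm{{ex}}(x)}|<\infty$ for every $x$; under the standing hypothesis $|\mathrm{Sing}(v)|<\infty$ the latter is automatic. The strategy is to show that the relation $R_{\text{ex}}=\{(x,y)\mid y\in\overline{O_\mathrm{{ex}}(x)}\}$ is closed by showing that the decomposition $\{\overline{O_\mathrm{{ex}}(x)}\}$ into extended-orbit closures is upper semicontinuous, which is the standard route: for a $p.a.p.$-type decomposition into closed saturated sets on a compact space, upper semicontinuity of the decomposition is equivalent to $R_{\text{ex}}$ being closed.

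The key geometric input comes from the structural dichotomy already established. By Proposition~\ref{lem23} and the analysis of the non-wandering case, finiteness of singularities forces every singularity to be a center or saddle, and the extended orbit closures come in exactly two flavors: closed extended orbits (the periodic/center part) and extended orbit closures of the locally dense type. For the periodic part I would argue that center disks and extended periodic orbits vary continuously with the base point, so a sequence $(x_n,y_n)\in R_{\text{ex}}$ with $x_n\to x$, $y_n\to y$ and each $\overline{O_\mathrm{{ex}}(x_n)}$ a closed extended orbit forces $y\in\overline{O_\mathrm{{ex}}(x)}$. For the locally dense part I would use the Ma\u\i er finiteness (as in Lemma~\ref{prop11}): there are only finitely many closures of locally dense orbits, so if infinitely many $x_n$ lie in a fixed locally dense orbit closure $K$, then $x\in K$ and $K=\overline{O_\mathrm{{ex}}(x)}$, while $y=\lim y_n\in K$ as well, giving $y\in\overline{O_\mathrm{{ex}}(x)}$.

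The remaining step is to glue these two cases along the boundary, where saddle connections and extended centers meet the locally dense and periodic regions. Here I would pass to subsequences so that all $x_n$ lie in one stratum, use the finiteness of singularities to control the finitely many saddle-connection boundary components, and show that the only way a limit point $y$ can escape $\overline{O_\mathrm{{ex}}(x)}$ is by landing on a singularity that is neither a saddle nor an extended center, which Lemma~\ref{lem034} rules out. The main obstacle I expect is precisely this boundary-gluing: ensuring that when $x_n$ approaches a boundary extended orbit from the locally dense side while $y_n$ approaches from the periodic side (or vice versa), the limits still satisfy $y\in\overline{O_\mathrm{{ex}}(x)}$; controlling this requires carefully tracking how extended orbit closures behave across the finitely many boundary extended orbits, using the thin-neighborhood flow-box construction from the proof of Lemma~\ref{lem21} to reach locally dense orbits on both sides and the symmetry of $R_{\text{ex}}$ established in Lemma~\ref{lem24}.
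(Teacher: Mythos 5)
Your reduction to the ``if'' direction via Lemma~\ref{lem24}, and your use of Proposition~\ref{lem23} and the Ma\u\i er finiteness of locally dense orbit closures, match the paper's setup. But two things go wrong in the part that actually carries the weight. First, you invoke Lemma~\ref{lem034} to rule out bad singularities in the gluing step; that lemma has \emph{extended $R$-closedness} as a hypothesis, which is exactly what you are trying to prove, so the appeal is circular. (The conclusion you need there is in fact available from Proposition~\ref{lem23} under $|\mathrm{Sing}(v)|<\infty$ and non-wandering, so this is repairable, but as written it is a logical error.) Second, and more seriously, the step you yourself flag as ``the main obstacle I expect'' --- upper semicontinuity of the decomposition across the boundary where locally dense components, periodic regions, and closed extended orbits with saddle connections meet --- is precisely the content of the proposition, and your proposal describes what would need to be controlled without actually controlling it. Passing to subsequences and saying one must ``carefully track'' the behaviour is a statement of the problem, not a proof; the claim that ``center disks and extended periodic orbits vary continuously with the base point'' is likewise asserted, not established, and it is nontrivial exactly at extended periodic orbits containing saddles.

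The paper closes this gap differently: it shows that each connected component $C$ of $\overline{\mathrm{LD}}$ has a neighborhood $U$ with $U - C \subseteq \mathrm{Per}(v)$, so the finitely many locally dense minimal sets are isolated from one another and buffered by periodic orbits; it then passes to the quotient $S/\overline{O_\mathrm{ex}}$, observes that $\overline{\mathrm{LD}}$ projects to a finite set and that the image of $S - \overline{\mathrm{LD}}$ is a forest, concludes that the quotient is Hausdorff, and cites Lemma~2.3 of \cite{Y2} to convert Hausdorffness of the quotient into closedness of $R_{\mathrm{ex}}$. That global, combinatorial description of the quotient is what replaces your case-by-case sequential argument and is exactly the ingredient your proposal is missing. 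To complete your route you would need either to prove the buffering statement $U - C \subseteq \mathrm{Per}(v)$ and the tree structure of the periodic part yourself, or to supply a direct verification of the limit $(x_n,y_n)\to(x,y)$ in the mixed boundary cases; neither is done in the proposal.
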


\begin{proof} 
It suffices to show the ``if'' part.
Suppose that $v$ is extended p.a.p..  
By Proposition  \ref{lem23}, 
we have that 
each singularity is regular and so 
is a center or a saddle. 
By the Ma\v \i er Theorem  \cite{M}, 
the set of closures of locally dense orbits is finite.  
Then 
$S - \overline{LD} \subseteq \mathrm{int}\overline{\mathrm{Per}(v)}$ 
consists of 
periodic orbits, 
finitely many centers,  and 
finitely many closed extended orbits. 
By the extended p.a.p. property, 
we obtain that 
$\overline{LD}$ consists of finitely many minimal sets with respect to extended orbits. 
For any connected component $C$ of $\overline{LD}$, 
there is a \nbd $U$ of $C$ with $U - C \subseteq \mathrm{Per}(v)$.   
Consider the quotient map $\pi: S \to S/\overline{O_\mathrm{ex}}$ of closures of 
extended orbits. 
Then $\overline{LD}$ is the inverse image of a finite subset of $S/\overline{O_\mathrm{ex}}$ 
and $\pi(S - \overline{LD})$ is a forest (i.e. a disjoint union of trees). 
Then $S/\overline{O_\mathrm{ex}}$ is Hausdorff. 
By Lemma 2.3 \cite{Y2}, 
we have that 
$v$ is extended $R$-closed. 
\end{proof}

%
%


The finiteness 
and 
the non-existence of locally dense orbits 
imply the following corollary. 

%

\begin{corollary} 
Suppose that $v$ is a non-identical $\R$-action with finitely many singularities 
on  a compact surface with genus $0$. 
The following are equivalent: 
\\
1) 
$v$ is extended $R$-closed. 
\\ 
2) 
$v$ is extended p.a.p.. 
\\ 
3) 
$v$ is extended recurrent.   
\\
4) 
$v$ is regular non-wandering. 
\\
5)
$v$ consists of closed extended orbits. 
\end{corollary}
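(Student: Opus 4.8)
The plan is to reduce everything to the two hypotheses singled out in the sentence preceding the statement: the flow has only finitely many singularities, and it has no locally dense orbits. The finiteness is assumed outright. For the absence of locally dense orbits I would invoke the classical Poincar\'e--Bendixson theory: a compact surface of genus $0$ embeds in $\S^2$, and a flow on such a surface admits no nontrivial minimal set, hence no locally dense orbit. This is the one genuinely geometric input; once it is in place the corollary is an assembly of the results already proved. I would also carry along the standing ``non-identical'' hypothesis, since it is exactly what Propositions \ref{prop32} and \ref{lem42} require.

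With ``no locally dense orbits'' established, Proposition \ref{prop32} applies verbatim and yields the equivalence of its three conditions. In the present notation these read: $v$ is extended p.a.p.\ (our 2), $v$ is extended recurrent with $|\mathrm{Sing}(v)\cap\overline{O_\mathrm{{ex}}(x)}|<\infty$ for each $x$ (our 3), and $v$ consists of closed extended orbits (our 5). Here I would note that the finiteness of $\mathrm{Sing}(v)$ makes the cardinality clause automatic, since $|\mathrm{Sing}(v)\cap\overline{O_\mathrm{{ex}}(x)}|\le|\mathrm{Sing}(v)|<\infty$; thus Proposition \ref{prop32}(2) is exactly condition 3 as stated. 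This gives $2)\Leftrightarrow 3)\Leftrightarrow 5)$.

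Next, the equivalence $1)\Leftrightarrow 2)$ is precisely Proposition \ref{lem42}, whose only hypothesis is $|\mathrm{Sing}(v)|<\infty$, which is assumed. Finally, for $3)\Leftrightarrow 4)$ I would argue as follows. If $v$ is extended recurrent, then Lemma \ref{lem2} shows it is non-wandering; combining non-wandering, extended recurrence, and finiteness of singularities, the ``if'' direction of Proposition \ref{lem23} yields that $v$ is regular, so $v$ is regular non-wandering. Conversely, if $v$ is regular non-wandering, then the ``only if'' direction of Proposition \ref{lem23}, applied to the non-wandering $v$, gives that $v$ is extended recurrent. This closes the chain, and together the three blocks show all five conditions are equivalent.

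The main obstacle is really only the genus $0\Rightarrow$ no locally dense orbits step, which is where the topological hypothesis is consumed; the remainder is a careful matching of the statements of Propositions \ref{prop32}, \ref{lem42}, and \ref{lem23} against the five conditions. The two small points to watch are that the finite-singularity hypothesis collapses the cardinality clause in Proposition \ref{prop32}(2), and that the same hypothesis supplies the missing ingredient needed to apply Proposition \ref{lem23} in the direction $3)\Rightarrow 4)$.
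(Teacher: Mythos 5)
Your proof is correct and takes essentially the same route the paper intends: the paper gives no explicit proof beyond the remark that ``the finiteness and the non-existence of locally dense orbits imply the following corollary,'' which is exactly your reduction (genus $0$ forces the absence of locally dense orbits, the finite-singularity hypothesis collapses the cardinality clause) followed by the assembly of Propositions \ref{prop32} and \ref{lem42} and, via Lemma \ref{lem2}, Proposition \ref{lem23}.
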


\section{An extended non-wandering}

Naturally, 
we can define 
extended non-wandering property as others. 
It's easy to see that 
extended 
non-wandering property 
and 
(usual) 
non-wandering property 
are equivalent 
if the set of singularities are finite. 
The author don't know 
whether 
these notions 
are same or not 
in general.

\section{A note for a more generalization of orbits}

Let $F$ be a compact invariant set of $v$. 
Then 
$F$ is said to be isolated (from minimal sets) if  
there exists a neighborhood $U$ of $F$ such that any minimal set contained in $U$
is a subset of $F$.
$F$ is called a saddle set if there exists a \nbd $U$ of
$F$ such that $\overline{G}_{U}\cap F\neq\phi$, 
where 
$G_{U} := \{ x \in \overline{U} - F \mid  O^{+}(x) \not\subseteq \overline{U}, O^{-}(x) \not\subseteq \overline{U}\}$. 
In the definition of extended orbits of $x$, 
if we replace saddles with isolated saddle sets, 
then we call that 
the resulting extended orbits are 
generalized extended orbits, 
denoted by 
$O^-_\mathrm{{ge}}(x)$, 
$O^+_\mathrm{{ge}}(x)$, 
$O_\mathrm{{ge}}(x)$. 
Also we define some ``generalized'' notation by replacing saddles with isolated saddle sets. 
By the definitions, 
we notice that 
extended recurrence implies 
generalized recurrence. 
Then 
one can show the generalized version of 
Lemma \ref{lem023} 
in the similar fashion 
if one replaces saddles (resp. ex) with isolated saddle sets (resp. ge). 
However, 
this generalization does not imply 
 the generalized version of 
Lemma \ref{lem2}, \ref{prop11}. 
%
Moreover 
non-wandering property and 
generalized recurrence are independent. 
In fact, 
the following example is a vector field on $\S^2$ which is not non-wandering 
but generalized recurrent. 
Let 
$D = \{ (x, y) \in \R^2 \mid x^2 + y^2 \leq 1 \}$,
$D_+ = \{ (x, y) \in D \mid x > 0 \}$,
$D_- = \{ (x, y) \in D \mid x < 0 \}$,
$p_+ := (0, 1)$, 
$p_- := (0, -1)$,  
and let $v$ be a flow on $D$ 
such that 
$\mathrm{Fix}(v) = \{ (0, y) \in D \}$ 
and that 
$\alpha(p) = p_{\mp}$ 
and 
$\omega(p) = p_{\pm}$ 
for each point $p \in D_{\pm}$. 
Pasting an open center disk, 
we obtain a flow $v'$ on $\S^2$ 
whose fixed point set consists of 
$\mathrm{Fix}(v)$ and the center. 
Then $p_-, p_+$ are isolated saddle sets 
and so 
$D - \mathrm{Fix}(v) 
\subset   
O^{-}_{\mathrm{ge}}(y) 
= O^{+}_{\mathrm{ge}}(y)$ 
for any $y \in D - \mathrm{Fix}(v)$. 
This implies that 
$v'$ is generalized recurrent. 
On the other hand, 
the following example is a vector field on $\S^2$ 
which is not 
generalized recurrent 
but 
non-wandering. 
Define 
a vector field 
whose orbits consists of  
$\{ (1/n, 0) \}$, 
$\{ 1/n \} \times (\T^1 - \{ 0 \})$,   
and 
$\{ x \} \times \T^1$ for 
$n \in \mathbb{Z}_{>0}$ and for 
$x  \in \T^1 - \{ 1/m \mid m \in \mathbb{Z}_{>0} \}$. 
%
%
Because $\alpha (p) = \omega (p) = \{(0, 0) \}$ is a saddle set 
but not isolated for any $p \in \{ 0 \} \times (\T^1 - \{ 0 \})$
and so $O_\mathrm{{ex}}(p) = \{ 0 \} \times  (\T^1 - \{ 0 \})$ 
is not generalised recurrent.

\end{document}